\newcommand{\Ab}{\mathbf A}
\newcommand{\Fb}{\mathbf F}
\newcommand{\R}{\mathbb R}
\newcommand{\Z}{\mathbb Z}
\newcommand{\N}{\mathbb N}
\newcommand{\C}{\mathbb C}
\DeclareMathOperator{\E0}{E_{\rm g}} 
\DeclareMathOperator{\IM}{Im}
\DeclareMathOperator{\curl}{curl}\DeclareMathOperator{\Div}{div}
\newtheorem{thm}{Theorem}[section]
\newtheorem{prop}[thm]{Proposition}
\newtheorem{lem}[thm]{Lemma}
\newtheorem{corol}[thm]{Corollary}
\theoremstyle{remark}
\newtheorem{rem}[thm]{Remark}
\numberwithin{equation}{section}
\title[2D Ginzburg-Landau  functional]{The ground state energy of the two dimensional Ginzburg-Landau functional with variable magnetic field}
\author[K.Attar]{}
\author[]{K. Attar}
\begin{document}
\begin{abstract}
We consider the Ginzburg-Landau functional with a variable applied
magnetic field in a bounded and smooth two
dimensional domain. We determine an accurate asymptotic formula for
the minimizing energy when the Ginzburg-Landau parameter and the magnetic field are large and  of the
same order. As a consequence, it is shown how bulk superconductivity decreases in
average  as the applied magnetic field increases.
\end{abstract}
\keywords{superconductivity, Ginzburg-Laundau, Variable magnetic field}

\maketitle \textbf{AMS subject classifications.} 82D55
\section{Introduction}
\subsection{The functional and main results}
We consider a bounded open simply connected set $\Omega\subset\R^2$
with smooth boundary. We suppose that $\Omega$ models a
superconducting sample submitted to an applied external magnetic
field. The energy of the sample is given by the Ginzburg-Landau
functional,
\begin{multline}\label{eq-2D-GLf}
\mathcal E_{\kappa,H}(\psi,\Ab)= \int_\Omega\left[ |(\nabla-i\kappa
H\Ab)\psi|^2-\kappa^2|\psi|^2+\frac{\kappa^2}{2}|\psi|^4\right]\,dx
+\kappa^2H^2\int_{\Omega}|\curl\Ab-B_0|^2\,dx\,.
\end{multline}
Here $\kappa$ and $H$ are two positive parameters; $\kappa$ (the
Ginzburg-Landau constant) is a material parameter and $H$ measures
the intensity of the applied magnetic field. The wave function
(order parameter) $\psi\in H^1(\Omega;\C)$ describes the
superconducting properties of the material. The induced magnetic
field is $\curl \Ab$, where the potential
$\Ab\in{H}^1_{\Div}(\Omega)$, with $H^1_{\Div}(\Omega)$ is the space
defined in \eqref{eq-2D-hs} below. Finally, $B_{0}\in C^{\infty}(\overline{\Omega})$ is the
intensity of the external variable magnetic field and satisfies :
\begin{align}
|B_{0}| + |\nabla B_0 | >0 \mbox{ in } \overline{\Omega}\label{B(x)}\,.
\end{align}
The assumption in \eqref{B(x)} implies that for any open set $\omega$ relatively compact in $\Omega$  the set $\{x\in\omega, B_{0}(x)=0\}$ will be either empty, or consists of a union of smooth curves.
 Let $\Fb:\Omega\rightarrow \R^{2}$ be the vector field such that,
\begin{equation}\label{div-curlF}
\Div \Fb=0\,{\rm~and~}\,{\rm\curl \Fb}=B_{0}~{\rm
in~\Omega}\,,\,\,\,~\nu\cdot\Fb=0~{\rm on}~\partial\Omega.
\end{equation}
The vector $\nu$ is the unit interior normal vector of
$\partial\Omega$. The construction of $\Fb$ is recalled in the appendix. We define the
space,
\begin{equation}\label{eq-2D-hs} H^1_{\Div}(\Omega)=\{\Ab=(\Ab_{1},\Ab_{2})\in
H^1(\Omega)^{2}~:~\Div \Ab=0~{\rm in}~\Omega \,,\,\Ab\cdot\nu=0~{\rm
on}\,
\partial\Omega \,\}.
\end{equation}
Critical points $(\psi,\Ab)\in H^1(\Omega;\C)\times
H^1_{\Div}(\Omega)$ of $\mathcal E_{\kappa,H}$ are weak solutions of
the Ginzburg-Landau equations,
\begin{equation}\label{eq-2D-GLeq}
\left\{
\begin{array}{llll}
-(\nabla-i\kappa H\Ab)^2\psi=\kappa^2(1-|\psi|^2)\psi&{\rm in}&
\Omega
\\
-\nabla^{\bot}\curl(\Ab-\Fb)=\displaystyle\frac1{\kappa
H}\IM(\overline{\psi}\,(\nabla-i\kappa
H\Ab)\psi) &{\rm in}& \Omega\\
\nu\cdot(\nabla-i\kappa H\Ab)\psi=0&{\rm
on}&\partial\Omega\\
\curl\Ab=\curl\Fb&{\rm on}&\partial\Omega\,.
\end{array}\right.\end{equation}
Here, $\curl\Ab=\partial_{x_1}\Ab_{2}-\partial_{x_2}\Ab_{1}$ and
$\nabla^{\bot}\curl\Ab=(\partial_{x_2}(\curl\Ab),
-\partial_{x_1}(\curl\Ab)).$ If $\Div \Ab=0$, then
$\nabla^{\bot}\curl\Ab=\Delta \Ab$. In this paper, we study the
ground state energy defined as follows:
\begin{equation}\label{eq-2D-gs}
\E0(\kappa,H)=\inf\big\{ \mathcal E_{\kappa,H}(\psi,\Ab)~:~(\psi,\Ab)\in H^1(\Omega;\C)\times
H^1_{\Div}(\Omega)\big\}\,.
\end{equation}
More precisely, we give an asymptotic estimate which is valid in the simultaneous limit
$\kappa\rightarrow\infty$ and $H\rightarrow\infty$ in such a way that $\frac{H}{\kappa}$ remains asymptotically constant. The behavior of $\E0(\kappa,H)$ involves an auxiliary function
$g:[0,\infty)\to[-\frac12,0]$ introduced in \cite{SS02} whose
definition will be recalled in \eqref{eq-g(b)} below. The function
$g$ is increasing, continuous, $g(b)=0$ for all $b\geq 1$ and
$g(0)=-\frac{1}{2}$.
\begin{thm}\label{thm-2D-main}
Let $0<\Lambda_{\rm min}< \Lambda_{\rm max}$. Under Assumption \eqref{B(x)}, there exists positive
constants $C$, $\kappa_0$ and $\tau_0\in (1,2)$ such that if
\begin{align*}
\kappa_0 \leq \kappa,\qquad \Lambda_{\rm min}\leq
\frac{H}{\kappa}\leq  \Lambda_{\rm max}\,,
\end{align*}
then the ground state energy in \eqref{eq-2D-gs} satisfies,
\begin{equation}\label{eq-2D-thm}
 \left|\E0(\kappa,H)-\kappa^2\int_{\Omega}g\left(\displaystyle
\frac{H}{\kappa}\,|B_{0}(x)|\right)\,dx\right|\leq C\kappa^{\tau_0}\,.
\end{equation}
\end{thm}
Theorem \ref{thm-2D-main} was proved in \cite{SS02} when the magnetic field is constant $(B_{0}(x)=1)$. However, the estimate of the remainder is not explicitly given in \cite{SS02}.\\
The approach used in the proof of Theorem \ref{thm-2D-main} is slightly different from the one in \cite{SS02}, and is closer to that in \cite{FK} which studies the same problem when $\Omega\subset \R^{3}$ and $B_0$ constant.
\begin{corol}\label{corol-2D-main}
Suppose that the assumptions of Theorem~\ref{thm-2D-main} are satisfied.
Then the magnetic energy  of the minimizer  satisfies, for some positive constant C,
\begin{equation}\label{eq-2D-thm}
(\kappa H)^{2}\int_{\Omega}|\curl\Ab-B_{0}|^{2}\,dx \leq C\kappa^{\tau_{0}}\,.
\end{equation}
\end{corol}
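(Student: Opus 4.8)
Let $(\psi,\Ab)$ be a minimizer realizing $\E0(\kappa,H)$, and split the energy as
\[
\mathcal E_{\kappa,H}(\psi,\Ab)=\mathcal E_0(\psi,\Ab)+(\kappa H)^2\int_\Omega|\curl\Ab-B_0|^2\,dx,
\]
where $\mathcal E_0(\psi,\Ab)=\int_\Omega\big[|(\nabla-i\kappa H\Ab)\psi|^2-\kappa^2|\psi|^2+\tfrac{\kappa^2}2|\psi|^4\big]\,dx$ collects the kinetic and potential contributions. From the maximum principle applied to the first equation in \eqref{eq-2D-GLeq} one has $|\psi|\le 1$, and inserting the upper bound $\E0(\kappa,H)\le\kappa^2\int_\Omega g\,dx+C\kappa^{\tau_0}$ of Theorem~\ref{thm-2D-main} into the splitting gives the a priori bounds $\int_\Omega|(\nabla-i\kappa H\Ab)\psi|^2\,dx\le C\kappa^2$ and $(\kappa H)^2\int_\Omega|\curl\Ab-B_0|^2\,dx\le C\kappa^2$.

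A direct estimate of the super-current from the second equation in \eqref{eq-2D-GLeq} (testing against $\Ab-\Fb$ and using the Poincaré-type inequality $\|\Ab-\Fb\|_{H^1}\le C\|\curl(\Ab-\Fb)\|_{L^2}$ valid on $H^1_{\Div}(\Omega)$) would yield only $(\kappa H)^2\int_\Omega|\curl\Ab-B_0|^2\le C\|(\nabla-i\kappa H\Ab)\psi\|_{L^2}^2\le C\kappa^2$, which merely reproduces the trivial bound because the kinetic energy is genuinely of order $\kappa^2$. The gain of the factor $\kappa^{\tau_0-2}$ must therefore come from an energy comparison rather than from elliptic a priori estimates.

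The plan is to revisit the lower bound half of the proof of Theorem~\ref{thm-2D-main}, retaining the magnetic energy instead of discarding it. That argument localizes $\mathcal E_0(\psi,\Ab)$ on a partition of $\Omega$ into cells on which the induced field is almost constant, and bounds the local contribution from below by the model energy $\kappa^2\int g\big(\tfrac H\kappa|\curl\Ab|\big)$; the passage from $\curl\Ab$ to $B_0$ in the argument of $g$ costs, by the Lipschitz continuity of $g$ together with Cauchy--Schwarz, an error at most $C\kappa H\int_\Omega|\curl\Ab-B_0|\,dx$. Young's inequality bounds this by $\tfrac12(\kappa H)^2\int_\Omega|\curl\Ab-B_0|^2\,dx+C$, so that, absorbing the $O(1)$ term into $C\kappa^{\tau_0}$ (legitimate since $\tau_0>1$), the lower bound in fact reads
\[
\mathcal E_0(\psi,\Ab)+\tfrac12(\kappa H)^2\int_\Omega|\curl\Ab-B_0|^2\,dx\ge \kappa^2\int_\Omega g\Big(\tfrac H\kappa|B_0|\Big)\,dx-C\kappa^{\tau_0}.
\]

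Adding $\tfrac12(\kappa H)^2\int_\Omega|\curl\Ab-B_0|^2\,dx$ to both sides recovers $\E0(\kappa,H)$ on the left and gives
\[
\E0(\kappa,H)\ge \kappa^2\int_\Omega g\Big(\tfrac H\kappa|B_0|\Big)\,dx-C\kappa^{\tau_0}+\tfrac12(\kappa H)^2\int_\Omega|\curl\Ab-B_0|^2\,dx.
\]
Comparing with the upper bound of Theorem~\ref{thm-2D-main} makes the leading terms cancel and leaves $\tfrac12(\kappa H)^2\int_\Omega|\curl\Ab-B_0|^2\,dx\le 2C\kappa^{\tau_0}$, which is the assertion up to renaming the constant. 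I expect the only delicate point to be this second step: one must re-open the lower bound estimate of Theorem~\ref{thm-2D-main} rather than use it as a black box, so as to exhibit the local energy in terms of the induced field $\curl\Ab$ and to identify the replacement error as a term that Young's inequality pays for out of half of the available magnetic energy.
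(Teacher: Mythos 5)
Your overall skeleton --- a lower bound in which (part of) the magnetic energy is kept explicit, then compared against the upper bound of Theorem~\ref{thm-2D-main} so that the leading terms cancel --- is exactly the paper's comparison, and your first and last paragraphs are sound. The gap is the middle step, the inequality
\[
\mathcal E_0(\psi,\Ab)+\tfrac12(\kappa H)^2\int_\Omega|\curl\Ab-B_0|^2\,dx\ \ge\ \kappa^2\int_\Omega g\Big(\tfrac H\kappa|B_0|\Big)\,dx-C\kappa^{\tau_0}\,,
\]
which you do not prove but reduce to two claims: a cell-by-cell lower bound with the \emph{induced} field $\curl\Ab$ as the local model field, and the replacement of $g\big(\tfrac H\kappa|\curl\Ab|\big)$ by $g\big(\tfrac H\kappa|B_0|\big)$ paid for ``by the Lipschitz continuity of $g$''. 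That Lipschitz property is not available: Theorem~\ref{thm-thmd-AS} provides only continuity, monotonicity, concavity and the quadratic bounds \eqref{eq-prop-g(b)} near $b=1$. Concavity does give a Lipschitz bound on $[\delta,\infty)$, but with constant of order $1/\delta$ (the chord slope from $0$ to $\delta$), which blows up as $\delta\to0$; and $g$ is in fact expected to have infinite slope at $0$ (the increment $g(b)-g(0)$ behaves like $b|\log b|$ for small $b$). Since Assumption \eqref{B(x)} allows $B_0$ to vanish inside $\Omega$, the argument $\tfrac H\kappa|B_0(x)|$ does reach $0$, so your pointwise error bound $C\kappa H|\curl\Ab-B_0|$ is unjustified precisely in the general case the theorem covers. (A repair exists --- excise $\{|B_0|\le\epsilon\}$ at cost $C\kappa^2\epsilon$, use the concavity Lipschitz constant $C/\epsilon$ elsewhere, then Young, with $\epsilon=\kappa^{-1/8}$ --- but it is not in your write-up, and the cell-wise bound with $\curl\Ab$ as model field would still have to be constructed from scratch.)

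The structural point you missed is that the detour through $\curl\Ab$ is unnecessary, because for a critical point the elliptic a priori estimates already pin $\Ab$ to $\Fb$: \eqref{eq-2D-apriori2} gives $\|\curl(\Ab-\Fb)\|_{L^2}\le C/H$, hence \eqref{eq-est2} gives $\|\Ab-\Fb\|_{C^{0,\alpha}(\overline\Omega)}\le C/\kappa$, so on each cell, after subtracting the constant gradient $\nabla\phi_{x_0}$ and the gauge of Lemma~\ref{app F}, the potential $\Ab$ is within $O(\ell^\alpha/H)+O(\ell^2)$ of the linear potential whose curl is $B_0$. The cells are chosen where $B_0$ (not $\curl\Ab$) is nearly constant, and the local model field is $B_0$ from the start; consequently the paper's lower bound \eqref{eq-en-blk},
\[
\mathcal E_0(\psi,\Ab;\Omega)\ \ge\ \kappa^2\int_\Omega g\Big(\tfrac H\kappa|B_0(x)|\Big)\,dx-C\kappa^{\tau_0}\,,
\]
holds for $\mathcal E_0$ \emph{alone}, with no portion of the magnetic energy retained and no replacement inside $g$. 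Given that bound (it is established in the course of proving Theorem~\ref{thm-2D-main}; the magnetic term is merely dropped in \eqref{eq-glob-en1}, not consumed), the corollary is immediate: write $\E0(\kappa,H)=\mathcal E_0(\psi,\Ab;\Omega)+(\kappa H)^2\int_\Omega|\curl\Ab-B_0|^2\,dx$, insert the lower bound for $\mathcal E_0$ and the upper bound of Proposition~\ref{prop-ub-c0}, and the full magnetic energy (not half of it) is bounded by $C\kappa^{\tau_0}$. So your plan identifies the right comparison but manufactures, and then leaves open, a harder intermediate inequality than the one the argument actually needs.
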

\begin{rem}
The value of $\tau_0$ depends on the properties of $B_0$:  we find $\tau_0=\frac{7}{4}$ when $B_{0}$ does not vanish in $\overline{\Omega}$ and $\tau_0=\frac{15}{8}$ in the general case.
\end{rem}

\begin{thm}\label{thm-2D-op}
Suppose the assumptions of Theorem~\ref{thm-2D-main} are satisfied.
There exists positive constants $C$, $\kappa_0$ and a negative constant $\tau_{1}\in(-1,0)$ such that,
if $\kappa\geq\kappa_0\,,$ and $D\subset \Omega$ an open set, then the following is true.
\begin{enumerate}
\item If $(\psi,\Ab)\in H^1(\Omega;\C)\times H^1_{\Div}(\Omega)$ is a solution of \eqref{eq-2D-GLeq}, then,
\begin{equation}\label{eq-2D-op'}
\frac12\int_{D}|\psi|^4\,dx\leq
-\int_{D}g\left(\frac{H}{\kappa}|B_{0}(x)|\right)dx + C\kappa^{\tau_{1}}\,.
\end{equation}
\item
If $(\psi,\Ab)\in H^1(\Omega;\C)\times H^1_{\Div}(\Omega)$ is a
minimizer of \eqref{eq-2D-GLf}, then,
\begin{equation}\label{eq-2D-op}
\left|\int_{D}|\psi|^4\,dx+2\int_{D}g\left(\frac{H}{\kappa}|B_{0}(x)|\right)dx\right|\leq
C\kappa^{\tau_{1}}\,.
\end{equation}
\end{enumerate}
\end{thm}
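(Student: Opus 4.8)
The plan is to reduce both statements to a \emph{local} lower bound for the energy together with the Pohozaev-type identity satisfied by every critical point. Write $\Pi=\nabla-i\kappa H\Ab$. Multiplying the first equation in \eqref{eq-2D-GLeq} by $\overline\psi$ and integrating by parts (using the Neumann condition) gives, for any solution, $\int_\Omega|\Pi\psi|^2=\kappa^2\int_\Omega(1-|\psi|^2)|\psi|^2$; consequently the \emph{reduced} energy obeys the exact identity
\begin{equation*}
\int_\Omega\Big(|\Pi\psi|^2-\kappa^2|\psi|^2+\tfrac{\kappa^2}{2}|\psi|^4\Big)\,dx=-\frac{\kappa^2}{2}\int_\Omega|\psi|^4\,dx .
\end{equation*}
I would first record the standard a priori bounds: the maximum principle $\|\psi\|_{L^\infty}\le 1$, the kinetic bound $\int_\Omega|\Pi\psi|^2\le\kappa^2|\Omega|$ coming from the identity, and — feeding these into the second equation of \eqref{eq-2D-GLeq} via elliptic regularity for the Poisson-type equation satisfied by $\Ab-\Fb$ — the weak magnetic estimate $\|\curl\Ab-B_0\|_{L^2(\Omega)}\le C/\kappa$, valid for \emph{every} solution (not only minimizers).

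The heart of the matter is a local, weighted lower bound: for a smooth cut-off $\chi$ supported in $\Omega$ I claim
\begin{equation*}
\int_\Omega\chi^2\Big(|\Pi\psi|^2-\kappa^2|\psi|^2+\tfrac{\kappa^2}{2}|\psi|^4\Big)\,dx\ \ge\ \kappa^2\int_\Omega\chi^2\,g\!\Big(\tfrac{H}{\kappa}|B_0|\Big)\,dx-C\kappa^{\tau_0}.
\end{equation*}
This is the localized form of the lower bound already proved en route to Theorem~\ref{thm-2D-main}: one covers $\supp\chi$ by squares of a suitable $\kappa$-dependent side length, on each square freezes $\curl\Ab$ to its average, and compares to the reference problem defining $g$. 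To replace the frozen average of $\curl\Ab$ by $B_0$ I would use the Lipschitz continuity of $g$ together with the weak magnetic estimate above: the resulting discrepancy is $O\big(\tfrac H\kappa\|\curl\Ab-B_0\|_{L^2}\big)\cdot\kappa^2=O(\kappa)$, which is harmless since $\tau_0>1$. I expect this covering/approximation step — keeping all errors below $\kappa^{\tau_0}$ while $B_0$ varies and possibly vanishes — to be the main technical obstacle; it is the reason $\tau_0$ (hence $\tau_1=\tau_0-2$) depends on whether $B_0$ vanishes.

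For statement (1) I combine the two displays. Localizing the reduced identity with $\chi$ produces a cross term $R=-2\RE\int_\Omega\chi\,\overline\psi\,\nabla\chi\cdot\Pi\psi$, bounded by $\|\nabla\chi\|_\infty\,\|\psi\|_{L^2(\supp\nabla\chi)}\,\|\Pi\psi\|_{L^2}$. Choosing $\chi\equiv1$ on $D$ with $\nabla\chi$ supported in a collar of $\partial D$ of width $\delta$ makes $|R|$ and the collar discrepancies $\int(\chi^2-\mathbf 1_D)\big(|\psi|^4+|g|\big)$ small; for $\delta$ an appropriate negative power of $\kappa$ (the constraints are compatible precisely because $\tau_0\ge\tfrac43$) they stay within $C\kappa^{\tau_0}$. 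Substituting the identity into the local lower bound gives $\tfrac{\kappa^2}{2}\int_D|\psi|^4\le-\kappa^2\int_D g\big(\tfrac H\kappa|B_0|\big)+C\kappa^{\tau_0}$; dividing by $\kappa^2/2$ yields \eqref{eq-2D-op'} with $\tau_1=\tau_0-2\in(-1,0)$. Note that the magnetic self-energy never enters here, because the identity used is the one for the \emph{reduced} energy.

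For statement (2), the upper bound is (1) applied to the minimizer. For the matching lower bound I would first establish the global $L^4$ asymptotics: the identity gives $\mathcal E_{\kappa,H}(\psi,\Ab)=-\tfrac{\kappa^2}{2}\int_\Omega|\psi|^4+(\kappa H)^2\int_\Omega|\curl\Ab-B_0|^2$, so Theorem~\ref{thm-2D-main} together with Corollary~\ref{corol-2D-main} (which bounds the magnetic term by $C\kappa^{\tau_0}$) yields $\big|\int_\Omega|\psi|^4+2\int_\Omega g(\tfrac H\kappa|B_0|)\big|\le C\kappa^{\tau_1}$. Applying the upper bound (1) on the open set $\Omega\setminus\overline D$ and subtracting from this global identity,
\begin{equation*}
\int_D|\psi|^4=\int_\Omega|\psi|^4-\int_{\Omega\setminus\overline D}|\psi|^4\ \ge\ -2\int_D g\!\Big(\tfrac H\kappa|B_0|\Big)-C\kappa^{\tau_1},
\end{equation*}
which combined with the upper bound proves \eqref{eq-2D-op}. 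The only delicate points are the uniform control of the collar terms for a general open $D$ (one needs $|\partial D|=0$, or to absorb a collar of controlled measure) and the uniformity of $C$ over all subdomains, both handled by the cut-off construction of the previous paragraph.
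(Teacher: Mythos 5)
Your proposal is correct in outline, and its two halves compare differently with the paper. For part (1) you essentially reproduce the paper's argument: the paper also multiplies the first equation of \eqref{eq-2D-GLeq} by $\overline\psi$, but integrates directly over $D$ and bounds the resulting boundary integral by $\mathcal O(\kappa)$ using the pointwise estimate \eqref{eq-est1}, then invokes the local lower bound \eqref{up-E-D} (which is valid for every solution, not just minimizers, since it rests only on Proposition~\ref{prop-lb} and the a priori estimates); your cutoff version with the cross term $R$ is equivalent bookkeeping that trades the boundary integral for a collar term. The genuine difference is in part (2). The paper obtains the reverse inequality from a local energy \emph{upper} bound for minimizers (Proposition~\ref{prop-ub}, hence Theorem~\ref{thm-2D-mainloc}), whose proof requires the nontrivial gluing construction ($w=\eta_R\psi$ outside the square, $e^{i\kappa H\varphi}u_R$ inside) and a page of error estimates. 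You bypass that machinery entirely: the global identity $\mathcal E_0(\psi,\Ab;\Omega)=-\tfrac{\kappa^2}{2}\int_\Omega|\psi|^4$ combined with Theorem~\ref{thm-2D-main} and Corollary~\ref{corol-2D-main} gives the two-sided $L^4$ asymptotics on $\Omega$, and part (1) applied to the open set $\Omega\setminus\overline D$ then yields the missing lower bound on $\int_D|\psi|^4$ by subtraction, the sign $g\le 0$ working in your favor on $\partial D$. This is shorter and more economical; what it does not produce is the local energy asymptotics \eqref{eq-2Dl-thm}, which the paper establishes for independent interest. Your subtraction does require $|\partial D\cap\Omega|=0$ (otherwise the uncontrolled mass $\int_{\partial D}|\psi|^4$ enters), but the paper's proof shares this kind of implicit regularity assumption on $D$ through its integration by parts and Riemann-sum estimates.

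One caveat on your sketch of the weighted local lower bound: you invoke ``Lipschitz continuity of $g$'', which is not among the properties established in Theorem~\ref{thm-thmd-AS}. From continuity, monotonicity and concavity one only gets that $g$ is Lipschitz on $[\delta_0,\infty)$ for each fixed $\delta_0>0$, with constant blowing up as $\delta_0\to 0$; nothing in the paper excludes $g'(0^+)=+\infty$, and under Assumption \eqref{B(x)} the argument $\tfrac{H}{\kappa}|B_0|$ does reach a neighborhood of $0$. The fix is exactly the paper's device: excise the set $\{|B_0|\le\epsilon\}$ (where the crude bound $e(\psi,\Ab)\ge-\kappa^2|\psi|^2$ costs only $C\epsilon\kappa^2$), and on the remaining squares either use the Lipschitz constant $\mathcal O(\epsilon^{-1})$ — which with $\epsilon=\kappa^{-1/8}$ still keeps your discrepancy term $\ll\kappa^{\tau_0}$ — or, as the paper does, avoid any modulus of continuity altogether by freezing $|B_0|$ at its extremum over each square and using only monotonicity of $g$ together with Riemann sums. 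With that repair, and with the collar bookkeeping you already describe, your argument goes through and gives $\tau_1=\tau_0-2$ exactly as in the paper.
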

\begin{rem}
The value of $\tau_1$ depends on the properties of $B_0$:  we find $\tau_1=-\frac{1}{4}$ when $B_{0}$ does not vanish in $\overline{\Omega}$ and $\tau_1=-\frac{1}{8}$ in the general case.
\end{rem}
\subsection {Discussion of main result:}
If $\{x\in\overline{\Omega}: B_{0}(x)=0\}\neq\varnothing$ and $H=b\kappa,\,\,\,\,b>0,$ then
$\displaystyle g\left(\frac{H}{\kappa}|B_{0}(x)|\right)\neq0$ in $D=\left\{x\in\Omega: \displaystyle \frac{H}{\kappa}|B_{0}(x)|<1\right\}$, and $|D|\neq 0$. Consequently, for $\kappa$ sufficiently large, the restriction of $\psi$ on $D$ is not zero in $L^{2}(\Omega)$.
This is a significant difference between our result and the one for constant magnetic field. When the magnetic field is a non-zero constant, then (see \cite{FH-b}), there is a universal constant $\circleddash_{0}\in (\frac{1}{2},1)$ such that, if $H=b\kappa$ and $b>\circleddash^{-1}_0$, then $\psi=0$ in $\overline{\Omega}$. Moreover, in the same situation, when $H=b\kappa$ and $1<b<\circleddash^{-1}_0$, then $\psi$ is small every where except in a thin tubular neighborhood of $\partial\Omega$ (see \cite{XP}).
Our result goes in the same spirit as in \cite{PK}, where the authors established under the Assumption~\eqref{B(x)} that when $H=b\kappa^{2}$ and $b>b_0$, then $\psi=0~\rm{in}~\overline{\Omega}$. ($b_{0}$ is a constant).
\subsection {Notation.}
{Throughout the paper, we use the following notation:}
\begin{itemize}
\item We write $\mathcal E$ for the functional $\mathcal E_{\kappa,H}$ in \eqref{eq-2D-GLf}.
\item The letter $C$ denotes a positive constant that is independent of the parameters $\kappa$ and $H$, and whose value  may change from a formula to another.
\item If $a(\kappa)$ and $b(\kappa)$ are two positive functions, we write $a(\kappa)\ll b(\kappa)$ if $a(\kappa)/b(\kappa)\to0$ as $\kappa\to\infty$.
\item If $a(\kappa)$ and $b(\kappa)$ are two functions with $b(\kappa)\not=0$, we write $a(\kappa)\sim b(\kappa)$ if $a(\kappa)/b(\kappa)\to1$ as $\kappa\to\infty$.
\item If $a(\kappa)$ and $b(\kappa)$ are two positive functions, we write $a(\kappa)\approx b(\kappa)$
if there exist positive constants $c_1$, $c_2$ and $\kappa_0$ such
that $c_1b(\kappa)\leq a(\kappa)\leq c_2b(\kappa)$ for all
$\kappa\geq\kappa_0$.
\item If $x\in\R$, we let $[x]_+=\max(x,0)$.
\item Given $R>0$ and $x=(x_{1},x_{2})\in\R^{2}$, we denote by
$Q_{R}(x)=(-R/2+x_{1},R/2+x_{1})\times(-R/2+x_{2},R/2+x_{2})$ the
square of side length $R$ centered at $x$.
\item We will use the standard Sobolev spaces $W^{s,p}$. For integer
values of $s$ these are given by
$$W^{n,p}(\Omega):=\big\{u\in L^{p}(\Omega):D^{\alpha}u\in L^{p}(\Omega)~{\rm for~all~}|\alpha|\leq n\big\}\,.$$
\item Finally we use the standard symbol $H^{n}(\Omega)=W^{n,2}(\Omega).$
\end{itemize}
\section{The limiting energy}\label{sec-LE}
\subsection{Two-dimensional limiting energy}\label{sec-LE2D}
Given a constant $b\geq 0$ and an open set $\mathcal D\subset \R^2$,
we define the following Ginzburg-Landau energy,
\begin{equation}\label{eq-LF-2D}
G^{\sigma}_{b,\mathcal D}(u)=\int_{\mathcal D}\left(b|(\nabla-i\sigma\Ab_0)u|^2
-|u|^2+\frac1{2}|u|^4\right)\,dx\,,\qquad\forall\,u\in H^{1}_{0}(D).
\end{equation}
Here $\sigma\in\{-1,+1\}$ and $\Ab_0$ is the canonical magnetic potential,
\begin{equation}\label{eq-hc2-mpA0}
\Ab_{0}(x)=\frac1{2}(-x_2,x_1)\,,\qquad\forall\,x=(x_1,x_2)\in\R^{2},
\end{equation}
that satisfies:
$$\curl\Ab_0=1\,\,{\rm~in~}\R^{2}\,.$$
We write $Q_{R}=Q_{R}(0)$ and let
\begin{equation}\label{m_0}
m_{0}(b,R)=\inf_{u\in H^1_0(Q_R;\C)} G^{+1}_{b,Q_R}(u)\,.
\end{equation}
\begin{rem}\label{m_0bar}
As $G^{+1}_{b,\mathcal D}(u)=G^{-1}_{b,\mathcal D}(\overline{u})$, it is immediate that,
\begin{equation}
\inf_{u\in H^1_0(Q_R;\C)} G^{-1}_{b,Q_R}(u)=\inf_{u\in H^1_0(Q_R;\C)} G^{+1}_{b,Q_R}(u)\,.
\end{equation}
\end{rem}
The main part of the next theorem was obtained by
Sandier-Serfaty \cite{SS02} and Aftalion-Serfaty \cite[Lemma
2.4]{AS}. However, the estimate in \eqref{eq-remainder} is obtained
by Fournais-Kachmar \cite{AK}.
\begin{thm}\label{thm-thmd-AS}
Let $m_0(b,R)$ be as defined in \eqref{m_0}.
\begin{enumerate}
\item For all $b\geq 1$ and $R>0$, we have $m_0(b,R)=0$.
\item For any $b\in[0,\infty)$, there exists a constant $g(b)\leq 0$ such that,
\begin{equation}\label{eq-g(b)}
 g(b)=\lim_{R\to\infty}\frac{m_0(b,R)} {|Q_R|}\,\,\,\,\,\,\,~{\rm
and~}\quad g(0)=-\frac{1}{2}\,.
\end{equation}
\item The function $[0,+\infty)\ni b\mapsto g(b)$ is continuous,
non-decreasing, concave and its range is the interval
$[-\frac12,0]$.
\item There exists a  constant $\alpha\in(0,\frac12)$ such
  that,
\begin{equation}\label{eq-prop-g(b)}\forall~b\in[0,1]\,,\quad
  \alpha(b-1)^2\leq|g(b)|\leq \frac12(b-1)^2\,.
\end{equation}
\item There exist  constants $C$ and $R_0$ such that,
\begin{equation}\label{eq-remainder}
\forall~R\geq R_0\,,~\forall~b\in[0,1]\,,\quad g(b)\leq
\frac{m_0(b,R)}{R^2}\leq g(b)+\frac{C}{R}\,.
\end{equation}
\end{enumerate}
\end{thm}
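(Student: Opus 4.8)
The plan is to read all five assertions off the single variational quantity $m_0(b,R)$, exploiting that for each fixed $u$ the map $b\mapsto G^{+1}_{b,Q_R}(u)$ is affine and non-decreasing, together with a tiling (subadditivity) argument made possible by magnetic translations. For part (1), I would start from the Landau bound: since the bottom of the spectrum of $-(\nabla-i\Ab_0)^2$ on $\R^2$ equals $\curl\Ab_0=1$, extending any $u\in H^1_0(Q_R)$ by zero gives $\int_{Q_R}|(\nabla-i\Ab_0)u|^2\ge\int_{Q_R}|u|^2$. Hence for $b\ge1$ we get $G^{+1}_{b,Q_R}(u)\ge\frac12\int_{Q_R}|u|^4\ge0$, while $u\equiv0$ attains $0$, so $m_0(b,R)=0$. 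The structural half of part (3) is then immediate: $m_0(\cdot,R)$ is an infimum of affine, non-decreasing functions of $b$, hence concave and non-decreasing, and these properties survive the passage to the limit defining $g$; finite concave functions are continuous on $(0,\infty)$, and the boundary values $g(b)=0$ for $b\ge1$ together with $g(0)=-\frac12$ will pin the range to $[-\frac12,0]$.

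Part (2) is the subadditivity step. The key device is the magnetic translation: since $\Ab_0(x)-\Ab_0(c)=\Ab_0(x-c)$, the substitution $v(x)=e^{i\Ab_0(c)\cdot x}u(x-c)$ carries a competitor on $Q_R$ to $Q_R(c)$ with $|v(x)|=|u(x-c)|$ and $|(\nabla-i\Ab_0)v(x)|=|(\nabla-i\Ab_0)u(x-c)|$, so the energy is unchanged. Tiling $Q_R$ by $\lfloor R/R'\rfloor^2$ disjoint translates of $Q_{R'}$ and gluing the transplanted minimizers (legitimate, since they lie in $H^1_0$ and so patch continuously by zero) gives
\[ m_0(b,kR')\le k^2\,m_0(b,R'),\qquad k=\lfloor R/R'\rfloor. \]
Writing $e(b,R)=m_0(b,R)/R^2$ and using $e(b,R)\le0$, this yields $\limsup_{R\to\infty}e(b,R)\le e(b,R')$ for every $R'$, hence $\lim_R e(b,R)=\inf_{R'}e(b,R')=:g(b)\le0$. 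For $b=0$ the integrand is minimized pointwise at $|u|^2=1$, so a profile equal to $1$ away from an $O(1)$ boundary layer gives $e(0,R)=-\frac12+O(1/R)$ and $g(0)=-\frac12$. The identity $g(b)=\inf_R e(b,R)$ also gives for free the lower bound $g(b)\le m_0(b,R)/R^2$ of part (5).

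For part (4), the upper bound on $|g|$ comes from combining the Landau bound (valid for all $b\ge0$) with $b\le1$ and minimizing pointwise: $G^{+1}_{b,Q_R}(u)\ge\int_{Q_R}\big((b-1)|u|^2+\frac12|u|^4\big)\ge-\frac12(1-b)^2|Q_R|$, so $g(b)\ge-\frac12(b-1)^2$. The matching lower bound $|g(b)|\ge\alpha(b-1)^2$ needs a trial state: a lowest-Landau-level (Abrikosov) periodic configuration of average density $\rho=(1-b)/\beta$ has energy density $\approx(b-1)\rho+\frac{\beta}{2}\rho^2=-\frac{(1-b)^2}{2\beta}$, where $\beta>1$ is the Abrikosov ratio $\langle|u|^4\rangle/\langle|u|^2\rangle^2$; periodizing this on $Q_R$ and discarding the $O(R)$ boundary contribution gives $g(b)\le-\alpha(b-1)^2$ with $\alpha=\tfrac1{2\beta}\in(0,\tfrac12)$.

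The genuine difficulty is the upper bound $e(b,R)\le g(b)+C/R$ of part (5); this is the step I expect to be \emph{the main obstacle}, and it is exactly the estimate attributed to Fournais--Kachmar. The elementary tiling of part (2) only gives $e(b,R)\le e(b,R')+R'/R$, which is convergence without an explicit rate. To obtain the clean $O(1/R)$ I would instead localize from a large scale: using the a priori bound $\|u\|_\infty\le1$ for minimizers (maximum principle), take a minimizer $u_N$ on $Q_{NR}$, split $Q_{NR}$ into $N^2$ subsquares of side $R$, select by averaging one on which the restricted energy is at most $e(b,NR)R^2$, magnetically translate it to $Q_R$, and multiply by a cutoff vanishing on $\partial Q_R$. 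The subtle points are to choose the cutoff width so that the localization error, controlled by the perimeter times the bounded density and $|\nabla\chi|^2$, is genuinely $O(R)$, and to keep the constant uniform in $N$ and in $b\in[0,1]$; letting $N\to\infty$ and using $e(b,NR)\to g(b)$ then yields $m_0(b,R)\le g(b)R^2+CR$. Finally, continuity of $g$ at the endpoint $b=0$ does not follow from concavity alone, since a concave function may drop at a left endpoint; it requires separately improving the small-$b$ construction to show $\lim_{b\to0^+}g(b)=-\frac12$, and I regard this as the remaining delicate endpoint issue.
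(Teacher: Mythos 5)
The paper itself does not prove this theorem: it imports it from the literature, attributing parts (1)--(4) to Sandier--Serfaty \cite{SS02} and Aftalion--Serfaty \cite{AS}, and the remainder estimate \eqref{eq-remainder} to Fournais--Kachmar \cite{AK}. So the only possible comparison is with those proofs, and your reconstruction follows essentially their route, correctly: the Landau bound $\int|(\nabla-i\Ab_0)u|^2\ge\int|u|^2$ for $u\in H^1_0(Q_R)$ extended by zero gives (1) and the upper bound in (4); viewing $m_0(\cdot,R)$ as an infimum of non-decreasing affine functions of $b$ gives the structural half of (3); subadditivity via the magnetic translations $v(x)=e^{i\Ab_0(c)\cdot x}u(x-c)$ (valid because $\Ab_0$ is linear) gives (2), and since the limit is then $\inf_{R}m_0(b,R)/R^2$, it gives the lower bound in (5) for free; scaled lowest-Landau-level trial states give the lower bound in (4) with $\alpha=\frac{1}{2\beta}$; and your plan for the $O(1/R)$ upper bound in (5) --- average over the $N^2$ subsquares of a minimizer on $Q_{NR}$, select a good subsquare, cut off with $O(R)$ localization cost using $\|u\|_\infty\le1$, magnetically translate, and let $N\to\infty$ --- is exactly the Fournais--Kachmar mechanism, and the error bookkeeping you describe does close.

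The genuine gap is the one you flag at the end but leave unresolved: continuity of $g$ at $b=0$. This is part of assertion (3) (the theorem claims continuity on $[0,+\infty)$, not on $(0,\infty)$), and your statement that the range is $[-\frac12,0]$ also hinges on it: monotonicity plus continuity on $(0,\infty)$ only yield the range $\{-\tfrac12\}\cup\bigl(\lim_{b\to0^+}g(b),\,0\bigr]$, which is the full interval only if $\lim_{b\to0^+}g(b)=-\tfrac12$. Nothing you prove closes this: concavity permits a drop at a left endpoint (as you note), and part (4) only gives $g(b)\le-\alpha(1-b)^2\le-\alpha$ near $0$ with $\alpha<\tfrac12$. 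What is missing is a family of trial states for small $b>0$ whose energy density tends to $-\tfrac12$ as $b\to0^+$; the naive choice $u\equiv1$ in the bulk fails because its kinetic term is of order $b\int_{Q_R}|\Ab_0|^2\sim bR^4$, which is not $o(R^2)$ for fixed $b$, so one must insert a lattice of vortices with spacing of order $b^{-1/2}$ so that the phase can follow the magnetic field, at a logarithmic cost per vortex, yielding an estimate of the form $g(b)\le-\tfrac12+Cb|\log b|$. That construction is the actual content of continuity at $0$ in the cited references; without it your argument establishes continuity only on $(0,\infty)$ and does not pin down the range.
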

\section{A priori estimates}\label{sec-apest}

The aim of this section is to give {\it a priori} estimates for
solutions of the Ginzburg-Landau equations \eqref{eq-2D-GLeq}. These
estimates play an essential role in controlling the errors resulting
from various approximations.
The starting point is the following  $L^\infty$-bound  resulting
from the maximum principle. Actually, if $(\psi,\Ab)\in
H^1(\Omega;\C)\times H^1_{\Div}(\Omega)$ is a solution of
\eqref{eq-2D-GLeq}, then
\begin{equation}\label{eq-psi<1}
\|\psi\|_{L^\infty(\Omega)}\leq1\,.
\end{equation}
The set of estimates below is proved in \cite[Theorem~3.3 and Eq.
3.35]{FH-p} (see also \cite{Pa} for an earlier version).
\begin{thm}\label{thm-2D-apriori}~
Let $\Omega\subset\R^2$ be bounded and smooth and $B_{0}\in
C^{\infty}(\overline{\Omega})$.
\begin{enumerate}
\item
For all $p\in (1,\infty)$ there exists $C_p>0$ such that, if
$(\psi,\Ab)\in H^1(\Omega,C)\times H^1_{\Div}(\Omega)$ is a solution
of \eqref{eq-2D-GLeq}, then
\begin{equation}\label{eq-2D-ariori}
\|\curl \Ab-B_{0}\|_{W^{1,p}(\Omega)}\leq C_p\frac{1+\kappa
H+\kappa^2}{\kappa
  H}
\|\psi\|_{L^\infty(\Omega)}\|\psi\|_{L^2(\Omega)}\,.
\end{equation}
\item
For all $\alpha\in(0,1)$ there exists $C_\alpha>0$ such that, if
$(\psi,\Ab)\in H^1(\Omega,C)\times H^1_{\Div}(\Omega)$ is a solution
of \eqref{eq-2D-GLeq}, then
\begin{equation}\label{eq-2D-ariori'}
\|\curl \Ab-B_{0}\|_{C^{0,\alpha}(\overline{\Omega})}\leq
C_\alpha\frac{1+\kappa H+\kappa^2}{\kappa
  H}
\|\psi\|_{L^\infty(\Omega)}\|\psi\|_{L^2(\Omega)}\,.
\end{equation}
\item For all $p\in[2,\infty)$ there exists $C>0$ such that, if $\kappa>0$, $H>0$
  and $(\psi,\Ab)\in H^1(\Omega,C)\times
H^1_{\Div}(\Omega)$ is a solution of \eqref{eq-2D-GLeq}, then
\begin{equation}
\|(\nabla-i\kappa H\Ab)^2\psi\|_p\leq \kappa^2\|\psi\|_p\,,
\end{equation}
\begin{equation}\label{eq-2D-ariori''}
\|(\nabla-i\kappa H\Ab)\psi\|_2\leq \kappa\|\psi\|_2\,,
\end{equation}
\begin{equation}\label{eq-2D-ariori'''}
\|\curl(\Ab-\Fb)\|_{W^{1,p}(\Omega)}\leq \frac{C}{\kappa
H}\|\psi\|_\infty\|(\nabla-i\kappa H\Ab)\psi\|_p\,.
\end{equation}
\end{enumerate}
\end{thm}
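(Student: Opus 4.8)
The plan is to read all five estimates off the Ginzburg--Landau system \eqref{eq-2D-GLeq} directly, taking the $L^\infty$-bound \eqref{eq-psi<1} as the only a priori input and recovering higher integrability through magnetic elliptic regularity. The two algebraically transparent bounds in part (3) come first. Rewriting the first equation as $(\nabla-i\kappa H\Ab)^2\psi=-\kappa^2(1-|\psi|^2)\psi$ pointwise, the bound $\|\psi\|_{L^\infty(\Omega)}\leq1$ forces $|1-|\psi|^2|\leq1$, whence $|(\nabla-i\kappa H\Ab)^2\psi|\leq\kappa^2|\psi|$ pointwise, and taking $L^p$-norms gives $\|(\nabla-i\kappa H\Ab)^2\psi\|_p\leq\kappa^2\|\psi\|_p$. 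For \eqref{eq-2D-ariori''} I would multiply the first equation by $\overline\psi$ and integrate over $\Omega$; the Neumann-type boundary condition $\nu\cdot(\nabla-i\kappa H\Ab)\psi=0$ annihilates the boundary term, leaving $\int_\Omega|(\nabla-i\kappa H\Ab)\psi|^2\,dx=\kappa^2\int_\Omega(1-|\psi|^2)|\psi|^2\,dx\leq\kappa^2\|\psi\|_2^2$.

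Next I would establish the curl estimate \eqref{eq-2D-ariori'''} and use it to reach part (1). Setting $u=\curl(\Ab-\Fb)$, the second equation reads $\nabla^\bot u=-\frac1{\kappa H}\IM(\overline\psi\,(\nabla-i\kappa H\Ab)\psi)$, and since $|\nabla u|=|\nabla^\bot u|$ this yields $\|\nabla u\|_p\leq\frac1{\kappa H}\|\psi\|_\infty\|(\nabla-i\kappa H\Ab)\psi\|_p$. The boundary condition $\curl\Ab=\curl\Fb$ gives $u=0$ on $\partial\Omega$, so the Poincaré inequality for functions with vanishing trace upgrades the gradient bound to the full $W^{1,p}$-norm, which is \eqref{eq-2D-ariori'''}. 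Because $\curl\Fb=B_0$ by \eqref{div-curlF}, we have $\curl\Ab-B_0=u$, so part (1) follows from \eqref{eq-2D-ariori'''} as soon as the magnetic gradient is controlled in $L^p$. Part (2) is then immediate from the Sobolev embedding $W^{1,p}(\Omega)\hookrightarrow C^{0,\alpha}(\overline\Omega)$, valid in two dimensions for $p>2$ with $\alpha=1-2/p$.

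The hard part is the $L^p$-bound on $(\nabla-i\kappa H\Ab)\psi$ carrying the precise factor $(1+\kappa H+\kappa^2)/(\kappa H)$ against $\|\psi\|_2$, as demanded by \eqref{eq-2D-ariori}. The coupling here is genuine: the potential $\Ab$ sits inside the operator acting on $\psi$, so constant-coefficient elliptic estimates do not apply verbatim. I would interpolate between the two bounds already in hand, $\|(\nabla-i\kappa H\Ab)^2\psi\|_p\leq\kappa^2\|\psi\|_p$ and the $L^2$ control of \eqref{eq-2D-ariori''}, through a magnetic Gagliardo--Nirenberg inequality; the commutator $[(\nabla-i\kappa H\Ab)_1,(\nabla-i\kappa H\Ab)_2]=-i\kappa H\curl\Ab$ is exactly what feeds the $\kappa H$ term into the estimate, while $\|\psi\|_\infty\leq1$ is used to trade surplus integrability for $\|\psi\|_2$. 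Controlling the field $\curl\Ab$ appearing in that commutator in turn requires \eqref{eq-2D-ariori'''}, so the argument is genuinely self-consistent: a crude bound is fed into the curl estimate to control $\Ab$, and the loop is then closed to obtain the stated coefficient. The delicate point, which the cited references \cite{FH-p,Pa} carry out in full, is the uniform bookkeeping of every constant as $\kappa,H\to\infty$ with $H/\kappa\in[\Lambda_{\min},\Lambda_{\max}]$, so that the $p$-dependent constant $C_p$ absorbs the Sobolev refinement and only the clean factor $(1+\kappa H+\kappa^2)/(\kappa H)$ survives.
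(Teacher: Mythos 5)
First, a point of reference: the paper does not prove Theorem~\ref{thm-2D-apriori} at all. It quotes the result from \cite[Theorem~3.3 and Eq.~3.35]{FH-p} (with \cite{Pa} as an earlier version), so there is no internal proof to compare against; your attempt has to be judged on its own. The parts you prove in full are correct and are exactly the standard arguments: the pointwise bound $|(\nabla-i\kappa H\Ab)^2\psi|\leq\kappa^2|\psi|$ from \eqref{eq-psi<1} gives the first estimate of part (3); multiplying the first equation of \eqref{eq-2D-GLeq} by $\overline\psi$ and using the magnetic Neumann condition gives \eqref{eq-2D-ariori''}; and for \eqref{eq-2D-ariori'''}, writing $u=\curl(\Ab-\Fb)$, using $|\nabla u|=|\nabla^{\bot}u|$, the Dirichlet condition $u=0$ on $\partial\Omega$ (from the fourth equation of \eqref{eq-2D-GLeq}) and Poincar\'e is precisely right. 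Likewise the deduction of part (2) from part (1) via $W^{1,p}(\Omega)\hookrightarrow C^{0,\alpha}(\overline\Omega)$, $p=2/(1-\alpha)$, is fine.

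The genuine gap is in part (1). You correctly reduce it, via \eqref{eq-2D-ariori'''} and $\curl\Ab-B_0=\curl(\Ab-\Fb)$, to the bound $\|(\nabla-i\kappa H\Ab)\psi\|_p\leq C(1+\kappa H+\kappa^2)\|\psi\|_2$, but that bound is the entire analytic content of the theorem, and you do not prove it: you name a ``magnetic Gagliardo--Nirenberg inequality'' with a commutator term $\kappa H\curl\Ab$ and defer ``the bookkeeping'' to \cite{FH-p,Pa}. Two substantive points are missing. First, the inequality itself, roughly $\|(\nabla-i\kappa H\Ab)\psi\|_p^2\leq C\big(\|(\nabla-i\kappa H\Ab)^2\psi\|_p+\kappa H\|\curl\Ab\|_{\infty}\|\psi\|_p\big)\|\psi\|_p$, together with the bootstrap closing the loop on $\|\curl\Ab\|_\infty$, must be established with constants uniform in $\kappa,H$; asserting it is citing the result, not proving it. Second, your stated mechanism for reaching $\|\psi\|_2$ --- using $\|\psi\|_\infty\le1$ to ``trade surplus integrability'' --- cannot work as described: interpolation gives $\|\psi\|_p\leq\|\psi\|_\infty^{1-2/p}\|\psi\|_2^{2/p}$, which is \emph{sublinear} in $\|\psi\|_2$ and so cannot yield the stated linear dependence. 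The correct route is the diamagnetic inequality plus the two-dimensional embedding $H^1(\Omega)\hookrightarrow L^p(\Omega)$, giving $\|\psi\|_p\leq C_p\big(\|(\nabla-i\kappa H\Ab)\psi\|_2+\|\psi\|_2\big)\leq C_p(1+\kappa)\|\psi\|_2$ by \eqref{eq-2D-ariori''}, after which the extra factor is absorbed since $(\kappa^2+\kappa H)^{1/2}(1+\kappa)\leq C(1+\kappa H+\kappa^2)$. To be fair, deferring this hard step to \cite{FH-p} is no less than the paper itself does --- but it means your proposal is a correct reduction plus a citation, not a self-contained proof of part (1).
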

\begin{rem}:
\begin{enumerate}

\item Using the $W^{k,p}$-regularity of the Curl-Div system \cite[Appendix A, Proposition~A.5.1]{FH-b}, we obtain from \eqref{eq-2D-ariori},
\begin{equation}\label{eq-2D-ariori1}
\| \Ab-\Fb\|_{W^{2,p}(\Omega)}\leq C_p\frac{1+\kappa
H+\kappa^2}{\kappa
  H}
\|\psi\|_{L^\infty(\Omega)}\|\psi\|_{L^2(\Omega)}\,.
\end{equation}
The estimate is true for any $p\in[2,\infty)$.
\item
Using the Sobolev embedding Theorem we
get, for all $\alpha\in(0,1)$
\begin{equation}\label{eq-2D-ariori1'}
\|\Ab-\Fb\|_{C^{1,\alpha}(\overline{\Omega})}\leq
C_\alpha\frac{1+\kappa H+\kappa^2}{\kappa
  H}
\|\psi\|_{L^\infty(\Omega)}\|\psi\|_{L^2(\Omega)}\,.
\end{equation}
\item
Combining \eqref{eq-2D-ariori''} and \eqref{eq-2D-ariori'''} (with
$p=2$) yields
\begin{equation}\label{eq-2D-apriori2}
\|\curl(\Ab-\Fb)\|_{L^2(\Omega)}
\leq\frac{C}{H}\|\psi\|_{L^\infty(\Omega)}\|\psi\|_{L^2(\Omega)}\,.\end{equation}
\end{enumerate}
\end{rem}
Theorem~\ref{thm-2D-apriori} is needed in order to obtain the
improved {\it a priori} estimates of the next theorem. Similar
estimates are given in \cite{Pa}.
\begin{thm}\label{thm-2D-ad-est}
Suppose that $0<\Lambda_{\min}\leq \Lambda_{\max}$. There exist
constants $\kappa_0>1$, $C_1>0$ and for any $\alpha\in(0,1)$,
$C_\alpha>0$ such that, if
\begin{equation}\label{kappaH}
\kappa\geq\kappa_0\,,\quad \Lambda_{\min}\leq\frac{H}{\kappa}\leq \Lambda_{\max}\,,
\end{equation}
and $(\psi,\Ab)\in H^1(\Omega;\C)\times H^1_{\Div}(\Omega)$ is a
solution of \eqref{eq-2D-GLeq}, then
\begin{align}
&\|(\nabla-i\kappa H\Ab)\psi\|_{C(\overline{\Omega})}
\leq C_1\sqrt{\kappa H}\|\psi\|_{L^\infty(\Omega)}\,,\label{eq-est1}\\
& \|\Ab-\Fb\|_{H^{2}(\Omega)}\leq
C_1\left(\|\curl(\Ab-\Fb)\|_{L^2(\Omega)}+\frac1{\sqrt{\kappa H}}
\|\psi\|_{L^2(\Omega)}\|\psi\|_{L^\infty(\Omega)}
\right),\label{eq-est2'}\\
&\|\Ab-\Fb\|_{C^{0, \alpha}(\overline{\Omega})}\leq C_\alpha
\left(\|\curl(\Ab-\Fb)\|_{L^2(\Omega)}+\frac1{\sqrt{\kappa
H}}\|\psi\|_{L^2(\Omega)}\|\psi\|_{L^\infty(\Omega)}\right)\,.
\label{eq-est2}
\end{align}
\end{thm}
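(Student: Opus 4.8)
\emph{Overview and reductions.} The plan is to prove the three bounds in a modular fashion, reducing everything to \eqref{eq-est1} and \eqref{eq-est2'} and obtaining \eqref{eq-est2} for free. Indeed, \eqref{eq-est2} is an immediate consequence of \eqref{eq-est2'}: in two dimensions the Sobolev embedding $H^2(\Omega)\hookrightarrow C^{0,\alpha}(\overline\Omega)$ holds for every $\alpha\in(0,1)$, so applying it to $\mathbf u=\Ab-\Fb$ turns the $H^2$-bound into the claimed Hölder bound, with $C_\alpha$ the product of the embedding constant and $C_1$. Thus the genuine work is in \eqref{eq-est1} and \eqref{eq-est2'}, which can be established independently; the standing regime $\Lambda_{\min}\le H/\kappa\le\Lambda_{\max}$ will be used repeatedly through the equivalences $\sqrt{\kappa H}\approx\kappa$ and $\frac{1+\kappa H+\kappa^2}{\kappa H}\le C$.

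\emph{The field estimate \eqref{eq-est2'}.} I would start from the second Ginzburg–Landau equation in \eqref{eq-2D-GLeq}. Since $\Div(\Ab-\Fb)=0$, the identity $\nabla^\bot\curl=\Delta$ recorded after \eqref{eq-2D-GLeq} recasts it as
\[
-\Delta(\Ab-\Fb)=\frac{1}{\kappa H}\IM\big(\overline{\psi}\,(\nabla-i\kappa H\Ab)\psi\big)\quad\text{in }\Omega.
\]
The right-hand side is bounded pointwise by $|\psi|\,|(\nabla-i\kappa H\Ab)\psi|$, so \eqref{eq-2D-ariori''} gives $\|\Delta(\Ab-\Fb)\|_{L^2}\le \frac{1}{\kappa H}\|\psi\|_\infty\,\kappa\|\psi\|_2=\frac1H\|\psi\|_\infty\|\psi\|_2$; as $H/\kappa\ge\Lambda_{\min}$ forces $\frac1H\le\Lambda_{\min}^{-1/2}(\kappa H)^{-1/2}$, this is $\le C(\kappa H)^{-1/2}\|\psi\|_\infty\|\psi\|_2$. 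I would then invoke the Curl–Div elliptic regularity for the divergence-free, tangential field $\mathbf u=\Ab-\Fb$ (as in \cite[Appendix A]{FH-b}), namely $\|\mathbf u\|_{H^2}\le C(\|\curl\mathbf u\|_{H^1}+\|\mathbf u\|_{L^2})$; since $\nabla^\bot\curl\mathbf u=\Delta\mathbf u$ one has $\|\curl\mathbf u\|_{H^1}\le\|\curl\mathbf u\|_{L^2}+\|\Delta\mathbf u\|_{L^2}$, and the Friedrichs-type inequality $\|\mathbf u\|_{L^2}\le\|\mathbf u\|_{H^1}\le C\|\curl\mathbf u\|_{L^2}$ (valid because $\Omega$ is simply connected and $\mathbf u\in H^1_{\Div}(\Omega)$) removes the lower-order term. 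Substituting the bound for $\|\Delta\mathbf u\|_{L^2}$ yields exactly \eqref{eq-est2'}.

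\emph{The gradient estimate \eqref{eq-est1}.} This is the one requiring local analysis, and I would obtain it by blow-up at the natural magnetic length scale $\ell=(\kappa H)^{-1/2}$. Fix $x_0\in\overline\Omega$; since $|(\nabla-i\kappa H\Ab)\psi|$ is gauge invariant, after the transformation $\psi\mapsto e^{i\kappa H\phi}\psi,\ \Ab\mapsto\Ab-\nabla\phi$ with $\phi(x)=\Ab(x_0)\cdot(x-x_0)$ I may assume $\Ab(x_0)=0$. The bound \eqref{eq-2D-ariori1'}, which is $O(1)$ in the present regime since $\|\psi\|_\infty\le1$ by \eqref{eq-psi<1}, then gives $|\Ab(x)|\le C|x-x_0|$. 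Setting $w(y)=\|\psi\|_\infty^{-1}\psi(x_0+\ell y)$ and $\tilde{\mathbf A}(y)=\sqrt{\kappa H}\,\Ab(x_0+\ell y)$, the first equation in \eqref{eq-2D-GLeq} rescales to
\[
-(\nabla_y-i\tilde{\mathbf A})^2 w=\frac{\kappa}{H}\big(1-\|\psi\|_\infty^2|w|^2\big)w
\]
on a fixed ball, with $\|w\|_\infty\le1$, with $\kappa/H$ bounded, and with $\tilde{\mathbf A}$ bounded in $C^{0,1}$ uniformly (because $\sqrt{\kappa H}\,\ell=1$ makes $\nabla_y\tilde{\mathbf A}=(\nabla\Ab)(x_0+\ell y)=O(1)$, while $\tilde{\mathbf A}(0)=0$). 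Interior elliptic regularity for the magnetic Laplacian then bounds $w$ in $C^{1,\beta}$ on the half-size ball, so $|(\nabla_y-i\tilde{\mathbf A})w(0)|\le C$; undoing the scaling gives $|(\nabla-i\kappa H\Ab)\psi(x_0)|\le C\sqrt{\kappa H}\,\|\psi\|_\infty$, and taking the supremum over $x_0$ yields \eqref{eq-est1}.

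\emph{Main obstacle.} The hard part will be the boundary case of \eqref{eq-est1}: when $x_0\in\partial\Omega$ I must flatten $\partial\Omega$ before rescaling and transport the magnetic Neumann condition $\nu\cdot(\nabla-i\kappa H\Ab)\psi=0$ through the blow-up, verifying that the rescaled boundary becomes flat in the limit and, crucially, that the constants in the up-to-the-boundary elliptic estimate are uniform in $x_0$. The supporting bookkeeping—confirming that the gauge choice makes $\tilde{\mathbf A}$ uniformly bounded in $C^{0,\alpha}$ and that every constant depends only on $\Omega$, $B_0$, $\Lambda_{\min}$ and $\Lambda_{\max}$—is routine, but it is precisely where the uniformity of the estimate must be tracked with care.
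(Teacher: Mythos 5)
Your proposal is correct, and for \eqref{eq-est2'} and \eqref{eq-est2} it follows the same skeleton as the paper: curl--div elliptic regularity for the divergence-free tangential field $\Ab-\Fb$ applied to the second equation in \eqref{eq-2D-GLeq}, followed by the Sobolev embedding $H^{2}(\Omega)\hookrightarrow C^{0,\alpha}(\overline{\Omega})$. There are two genuine differences worth recording. First, to bound $\frac1{\kappa H}\IM(\overline{\psi}\,(\nabla-i\kappa H\Ab)\psi)$ in $L^2$, the paper uses the pointwise bound \eqref{eq-est1}, writing $|\overline{\psi}\,(\nabla-i\kappa H\Ab)\psi|\leq C_1\sqrt{\kappa H}\,\|\psi\|_{L^\infty(\Omega)}|\psi|$, so in the paper \eqref{eq-est2'} is a consequence of \eqref{eq-est1}; you instead use the elementary energy bound \eqref{eq-2D-ariori''}, $\|(\nabla-i\kappa H\Ab)\psi\|_2\leq\kappa\|\psi\|_2$, together with $1/H\leq C(\kappa H)^{-1/2}$ in the regime \eqref{kappaH}. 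Both yield the same estimate, but your route makes \eqref{eq-est2'} logically independent of the hard pointwise estimate \eqref{eq-est1}, a modest structural improvement; your removal of the lower-order term via the Friedrichs inequality is also fine, though the paper's Proposition \ref{curl-div-reg} gives $\|\Ab-\Fb\|_{H^2(\Omega)}\leq C\|\curl(\Ab-\Fb)\|_{H^1(\Omega)}$ directly through a stream function, so no lower-order term ever appears. Second, for \eqref{eq-est1} the paper offers no proof: it cites \cite[Proposition~12.4.4]{FH-b}. Your blow-up argument at the magnetic length scale $(\kappa H)^{-1/2}$ is essentially the standard proof of that cited result, and the interior analysis you sketch (gauge normalization $\Ab(x_0)=0$, uniform $C^{0,1}$ control of the rescaled potential via \eqref{eq-2D-ariori1'}, interior elliptic estimates for the rescaled equation) is sound; but what you call the ``main obstacle'' --- the boundary case with the magnetic Neumann condition --- is precisely the part that the citation disposes of, so a self-contained write-up would still require carrying out the boundary flattening and verifying uniformity of the up-to-the-boundary elliptic estimates, or else simply citing the reference as the paper does.
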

\begin{proof}~\\
{\bf Proof of \eqref{eq-est1}:} See \cite[Proposition~12.4.4]{FH-b}.\\
{\bf Proof of \eqref{eq-est2'}:}\\
Let $a=\Ab-\Fb$. Since $\Div a=0$ and $a\cdot\nu=0$ on
$\partial\Omega$, we get by regularity of the curl-div system (see appendix, Proposition \ref{curl-div-reg}),
\begin{equation}
\|a\|_{H^{2}(\Omega)}\leq C\|\curl
a\|_{H^{1}(\Omega)}\,.\label{eq-est3}
\end{equation}
The second equation in \eqref{eq-2D-GLeq} reads as follows,
$$-\nabla^{\bot}\curl a=\frac1{\kappa H}\IM(\overline\psi\,(\nabla-i\kappa H\Ab)\psi)\,.$$
The estimates in \eqref{eq-est1}  and \eqref{eq-est3} now give,
$$\|a\|_{H^{2}(\Omega)}\leq C\left(\|\curl a\|_{L^2(\Omega)}+\frac{1}{\sqrt{\kappa H}}\,\|\psi\|_{L^2(\Omega)}\|\psi\|_{L^{\infty}(\Omega)}\right)\,.$$
{\bf Proof of \eqref{eq-est2}:}\\
This is a consequence  of the $\mathrm{S}$obolev embedding of
$H^{2}(\Omega)$ into $C^{0,\alpha}(\overline{\Omega})$ for any $\alpha\in (0,1)$ and
\eqref{eq-est2'}.
\end{proof} 
\section{Energy estimates in small squares}\label{sec-locest}
If $(\psi,\Ab)\in H^1(\Omega;\C)\times H^1_{\Div}(\Omega)$, we
introduce the energy density,
$$e(\psi,\Ab)=|(\nabla -i\kappa
H\Ab)\psi|^2-\kappa^2|\psi|^2+\frac{\kappa^2}2|\psi|^4\,.$$ We also
introduce the local energy of $(\psi,\Ab)$ in
a domain $\mathcal D\subset\Omega$ :
\begin{equation}\label{eq-GLe0}
\mathcal E_0(u,\Ab;\mathcal D)=\int_{\mathcal D}e(\psi,\Ab)\,dx\,.
\end{equation}
Furthermore, we define the Ginzburg-Landau energy of $(\psi,\Ab)$ in
a domain $\mathcal D\subset\Omega$ as follows,
\begin{equation}\label{eq-GLen-D}
\mathcal E(\psi,\Ab;\mathcal D)=\mathcal E_0(\psi,\Ab;\mathcal
D)+(\kappa H)^2\int_{\Omega}|\curl(\Ab-\Fb)|^2\,dx\,.
\end{equation}
If $\mathcal D=\Omega$, we sometimes omit the dependence on the
domain and write $\mathcal E_0(\psi,\Ab)$ for $\mathcal
E_0(\psi,\Ab;\Omega)$.
We start with a lemma that will be useful in the proof of
Proposition~\ref{prop-lb} below. Before we start to state the lemma, we
define for all $(\ell,x_0)$ such that $
\overline{Q_{\ell}(x_0)}\subset\Omega$,
\begin{equation}\label{sup_B0}
\overline{B}_{Q_{\ell}(x_0)}=\sup_{x\in Q_{\ell}(x_0)}|B_{0}(x)|\,,
\end{equation}
where $B_{0}$ is introduced in \eqref{B(x)}. Later $x_0$ will be chosen in a lattice of $\R^{2}$.
\begin{lem}\label{lem-lb}
For any $\alpha\in(0,1)$. there exist positive
constants $C$ and $\kappa_0$ such that if \eqref{kappaH} holds, $
0<\delta<1,\quad0<\ell<1$,
and $(\psi,\Ab)\in H^1(\Omega;\C)\times H^1_{\Div}(\Omega)$ is a
critical point of \eqref{eq-2D-GLf} $($i.e. a solution of
\eqref{eq-2D-GLeq}$)$, then, for any square $Q_{\ell}(x_0)$ relatively compact in $\Omega\cap\{|B_0|>0\}$, there exists $\varphi\in
H^{1}(\Omega)$, such that,
\begin{multline}
\mathcal E_0(\psi,\Ab;Q_{\ell}(x_0))\\
\quad \geq(1-\delta)\mathcal E_{0}(e^{-i\kappa H\varphi}\psi,\sigma_{\ell}\overline{B}_{Q_{\ell}(x_0)}\Ab_{0}(x-x_{0}),Q_{\ell}(x_0))-C\kappa^{2}\left(\delta^{-1}\ell^{2\alpha}+\delta^{-1}\ell^{4}\kappa^{2}+\delta\right)\int_{Q_{\ell}(x_0)}|\psi|^{2}\,dx\,,
\end{multline}
where $\sigma_{\ell}$ denotes the sign of $B_{0}$ in $Q_{\ell}(x_{0})$.
\end{lem}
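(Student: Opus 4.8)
The plan is to localize in the square $Q_{\ell}(x_0)$, where $B_0$ is nearly constant, and to replace the true magnetic potential by the model potential with \emph{constant} field $\sigma_\ell\overline{B}_{Q_{\ell}(x_0)}$ via a gauge transformation. Write $b=\overline{B}_{Q_{\ell}(x_0)}$, $\sigma=\sigma_\ell$, and set $\Ab_{\rm mod}(x)=\sigma b\,\Ab_0(x-x_0)$, so that $\curl\Ab_{\rm mod}=\sigma b$. The heart of the argument is a decomposition, valid on $Q_{\ell}(x_0)$, of the form $\Ab=\Ab_{\rm mod}+\nabla\varphi+R$ for a suitable $\varphi\in H^1(\Omega)$ (constructed on the square and extended to $\Omega$, which does not affect the local energies) and a remainder field $R$ with $\|R\|_{L^\infty(Q_{\ell}(x_0))}\le C(\ell^2+\kappa^{-1}\ell^{\alpha})$. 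Once this is available, the lemma follows from the gauge identity $(\nabla-i\kappa H\Ab)\psi=e^{i\kappa H\varphi}(\nabla-i\kappa H(\Ab-\nabla\varphi))\tilde\psi$, with $\tilde\psi=e^{-i\kappa H\varphi}\psi$, together with the elementary inequality $|X-Y|^2\ge(1-\delta)|X|^2-\delta^{-1}|Y|^2$.

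To build the decomposition I would treat the reference field $\Fb$ and the fluctuation $a=\Ab-\Fb$ separately. Since $\curl(\Fb-\Ab_{\rm mod})=B_0-\sigma b$, and $B_0$ keeps the constant sign $\sigma$ on $\overline{Q_{\ell}(x_0)}\subset\{|B_0|>0\}$, the smoothness of $B_0$ there gives $\|B_0-\sigma b\|_{L^\infty(Q_{\ell}(x_0))}\le C\,\mathrm{osc}_{Q_{\ell}(x_0)}|B_0|\le C\ell$. Solving $\curl w=B_0-\sigma b$, $\Div w=0$, $w\cdot\nu=0$ on $Q_{\ell}(x_0)$ and rescaling to the unit square (curl-div $W^{1,p}$ regularity together with the Sobolev embedding $W^{1,p}\hookrightarrow L^\infty$, $p>2$) yields $\|w\|_{L^\infty(Q_{\ell}(x_0))}\le C\ell\,\|B_0-\sigma b\|_{L^\infty}\le C\ell^2$; then $\Fb-\Ab_{\rm mod}-w$ is curl-free, hence equals $\nabla\varphi_1$ on the simply connected square. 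For the fluctuation, the decisive input is that combining the improved estimate \eqref{eq-est2} with \eqref{eq-2D-apriori2}, the bound $\|\psi\|_{\infty}\le1$ from \eqref{eq-psi<1}, and $\Lambda_{\min}\le H/\kappa\le\Lambda_{\max}$, one obtains $\|a\|_{C^{0,\alpha}(\overline\Omega)}\le C\kappa^{-1}$. Writing $a=a(x_0)+(a-a(x_0))$ and absorbing the constant $a(x_0)=\nabla(a(x_0)\cdot(x-x_0))$ into the gauge, the Hölder bound gives $\|a-a(x_0)\|_{L^\infty(Q_{\ell}(x_0))}\le[a]_{C^{0,\alpha}}(\mathrm{diam}\,Q_{\ell})^{\alpha}\le C\kappa^{-1}\ell^{\alpha}$. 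Setting $\varphi=\varphi_1+a(x_0)\cdot(x-x_0)$ and $R=w+(a-a(x_0))$ proves the decomposition.

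With $X=(\nabla-i\kappa H\Ab_{\rm mod})\tilde\psi$ and $Y=i\kappa H R\,\tilde\psi$, the inequality above and $|\tilde\psi|=|\psi|$ give pointwise $|(\nabla-i\kappa H\Ab)\psi|^2\ge(1-\delta)|(\nabla-i\kappa H\Ab_{\rm mod})\tilde\psi|^2-\delta^{-1}(\kappa H)^2|R|^2|\psi|^2$. Integrating over $Q_{\ell}(x_0)$, using $\|R\|_\infty^2\le C(\ell^4+\kappa^{-2}\ell^{2\alpha})$ and $(\kappa H)^2\le C\kappa^4$, bounds the kinetic error by $C\delta^{-1}(\kappa^4\ell^4+\kappa^2\ell^{2\alpha})\int_{Q_{\ell}(x_0)}|\psi|^2\,dx$. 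Finally, the potential terms $-\kappa^2|\psi|^2+\tfrac{\kappa^2}{2}|\psi|^4$ are gauge invariant since $|\tilde\psi|=|\psi|$, so subtracting $(1-\delta)\mathcal E_0(\tilde\psi,\Ab_{\rm mod};Q_{\ell}(x_0))$ leaves the surplus $\delta\int_{Q_{\ell}(x_0)}\bigl(-\kappa^2|\psi|^2+\tfrac{\kappa^2}{2}|\psi|^4\bigr)\,dx\ge-\delta\kappa^2\int_{Q_{\ell}(x_0)}|\psi|^2\,dx$, which supplies the $C\delta\kappa^2\int|\psi|^2$ term. Summing the three contributions reproduces the stated estimate. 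The one genuinely delicate point is the $\Fb$-to-constant step: one must produce $w$ with an $L^\infty$ (not merely $L^2$) bound scaling like $\ell^2$, which forces the use of curl-div elliptic regularity on the rescaled unit square rather than a naive Poincaré argument; everything else is bookkeeping of the three errors against the parameters $\delta$, $\ell$, and $\kappa$.
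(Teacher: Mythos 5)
Your proof is correct, and its skeleton is the paper's: the same gauge-and-peel structure (the inequality $|X-Y|^2\ge(1-\delta)|X|^2-\delta^{-1}|Y|^2$), the same splitting of $\Ab-\sigma_{\ell}\overline{B}_{Q_{\ell}(x_0)}\Ab_{0}(x-x_{0})$ into a reference-field part and a fluctuation part $a=\Ab-\Fb$, the same control of the fluctuation via \eqref{eq-est2}, \eqref{eq-2D-apriori2} and $\|\psi\|_\infty\le 1$ (giving $\|a\|_{C^{0,\alpha}}\le C\kappa^{-1}$ and the linear gauge $a(x_0)\cdot(x-x_0)$), and the same bookkeeping producing $C\kappa^{2}(\delta^{-1}\ell^{2\alpha}+\delta^{-1}\ell^{4}\kappa^{2}+\delta)$. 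Where you genuinely diverge is the step approximating $\Fb$ by the constant-field potential modulo a gradient with error $O(\ell^{2})$. The paper obtains this in Lemma~\ref{app F} by a second-order Taylor expansion of $\Fb$ at a point $\widetilde{x_{0}}\in\overline{Q_{\ell}(x_0)}$ where $|B_0|$ attains its supremum: the antisymmetric part of $D\Fb(\widetilde{x_{0}})$ is exactly $B_{0}(\widetilde{x_{0}})\Ab_{0}$, the symmetric part is the gradient of an explicit quadratic polynomial, and the remainder is $O(\ell^{2})$; in particular the gauge function is a polynomial, automatically defined on all of $\Omega$. You instead solve $\curl w=B_{0}-\sigma_{\ell}\overline{B}_{Q_{\ell}(x_0)}$, $\Div w=0$, $w\cdot\nu=0$ on the square (equivalently a Dirichlet problem via $w=\nabla^{\bot}\phi$) and use rescaled elliptic regularity to get $\|w\|_{L^\infty}\le C\ell\,\|B_{0}-\sigma_{\ell}\overline{B}_{Q_{\ell}(x_0)}\|_{L^\infty}\le C\ell^{2}$. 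This is valid, and it buys slightly weaker hypotheses (a Lipschitz bound on $B_{0}$ rather than two derivatives of $\Fb$); but it is heavier, and two points need care that the Taylor route avoids entirely. First, the $W^{1,p}$ regularity of the curl-div system must hold up to the boundary of a \emph{square}, a domain with corners: the paper's Proposition~\ref{curl-div-reg} is stated for the smooth domain $\Omega$ and cannot be quoted as is; you need either Grisvard-type regularity on convex polygons (right angles contribute no singular exponents in the relevant range) or an odd-reflection argument across the edges and corners. Second, your $\varphi_{1}$ is a priori defined only on $Q_{\ell}(x_0)$, whereas the statement asks for $\varphi\in H^{1}(\Omega)$, so you must extend it (harmless, since only $\nabla\varphi$ on the square enters the energy, but it should be said). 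Finally, note that your oscillation bound $\|B_{0}-\sigma_{\ell}\overline{B}_{Q_{\ell}(x_0)}\|_{L^\infty(Q_{\ell}(x_0))}\le C\ell$ works precisely because $\sigma_{\ell}\overline{B}_{Q_{\ell}(x_0)}=B_{0}(\widetilde{x_{0}})$ is a value attained by $B_{0}$ on the closed square; this mirrors the paper's choice of expansion point.
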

\textit{Proof.}
\paragraph{\textbf{Construction of $\varphi$:}}~\\
 Let $\phi_{x_0}(x)=\big(\Ab(x_0)-\Fb(x_0)\big)\cdot x$, where $\Fb$
is the magnetic potential introduced in \eqref{div-curlF}. Using the
estimate in \eqref{eq-est2}, we get for all $x\in Q_{\ell}(x_0)$
and $\alpha\in(0,1)\,,$
\begin{align}\label{A-F}
|\Ab(x)-\nabla\phi_{x_0}-\Fb(x)|&=|(\Ab-\Fb)(x)-(\Ab-\Fb)(x_0)|\nonumber\\
&\leq\|\Ab-\Fb\|_{C^{0,\alpha}}\cdot|x-x_0|^{\alpha}\nonumber\\
&\leq C\frac{\sqrt{\lambda}}{\kappa H}\,\ell^{\alpha}\,,
\end{align}
where
$$\lambda=(\kappa H)^2\left(\|\curl(\Ab-\Fb)\|_{L^2(\Omega)}^2+\frac1{\kappa H}\|\psi\|^2_{L^2(\Omega)}\right)\,.$$
Using the bound $\|\psi\|_\infty\leq 1$ and the estimate in
\eqref{eq-2D-apriori2}, we get
\begin{equation}
\lambda\leq C\kappa^2\,,
\end{equation}
which implies that
\begin{equation}\label{eq-2D-est-g}
|\Ab(x)-\nabla\phi_{x_0}(x)-\Fb(x)|\leq C\,\frac{\ell^{\alpha}}{H}\,.
\end{equation}
We estimate the energy $\mathcal E_0(\psi,\Ab;Q_{\ell}(x_0))$ from below.
We will need the function $\varphi_{0}$
introduced in Lemma~\ref{app F} and satisfiying
$$|\Fb(x)-\sigma_{\ell}\overline{B}_{Q_{\ell}(x_0)}\Ab_{0}(x-x_{0})-\nabla\varphi_{0}(x)|\leq
C\ell^{2}\qquad{\rm in}~Q_{\ell}(x_0).$$ Let
\begin{equation}\label{defu}
u=e^{-i\kappa H\varphi}\psi\,,
\end{equation}
where $\varphi=\varphi_{0}+\phi_{x_{0}}$.
\paragraph{\textbf{Lower bound:}}~\\
 We start with estimating the kinetic energy from below as follows. For any
$\delta\in(0,1)$ and $\alpha\in(0,1)$, we write
\begin{align*}
|(\nabla-&i\kappa H\Ab)\psi|^2\nonumber\\
&=\Big|\Big(\nabla-i\kappa H(\sigma_{\ell}\overline{B}_{Q_{\ell}(x_0)}\Ab_{0}(x-x_{0})+\nabla\varphi)\Big)\psi-i\kappa H\Big(\Ab-\sigma_{\ell}\overline{B}_{Q_{\ell}(x_0)}\Ab_{0}(x-x_{0})-\nabla\varphi\Big)\psi\Big|^2\\
&\geq(1-\delta)\Big|\Big(\nabla-i\kappa H(\sigma_{\ell}\overline{B}_{Q_{\ell}(x_0)}\Ab_{0}(x-x_{0})+\nabla\varphi)\Big)\psi\Big|^2\\
&\qquad\qquad+(1-\delta^{-1})(\kappa
H)^2\Big|(\Ab-\nabla\phi_{x_{0}}-\Fb)\psi+(\Fb-\sigma_{\ell}\overline{B}_{Q_{\ell}(x_0)}\Ab_{0}(x-x_{0})-\nabla\varphi_{0})\psi\Big|^2\,.
\end{align*}
Using the estimates in \eqref{eq-2D-est-g}, \eqref{lem-F} and the assumptions in \eqref{kappaH}, we get,
$$|(\nabla-i\kappa H\Ab)\psi|^2\geq(1-\delta)\Big|\Big(\nabla-i\kappa
H(\sigma_{\ell}\overline{B}_{Q_{\ell}(x_0)}\Ab_{0}(x-x_{0})+\nabla\varphi)\Big)\psi\Big|^2-C\kappa^{2}\left(\delta^{-\frac1{2}}\ell^{2}H+\delta^{-\frac1{2}}\ell^{\alpha}\right)^{2}|\psi|^{2}\,.$$
Remembering the defintion of $u$ in \eqref{defu}, then, we deduce the lower bound of $\mathcal E_0$,
\begin{align}\label{es of A}
\mathcal E_0(\psi,&\Ab;Q_{\ell}(x_0))\nonumber\\
&\geq\int_{Q_{\ell}(x_0)}\left[(1-\delta)|(\nabla-i\kappa H(\sigma_{\ell}\overline{B}_{Q_{\ell}(x_0)}\Ab_{0}(x-x_{0}))u|^2-\kappa^2|u|^2+\frac{\kappa^2}{2}|u|^4\right]\,dx\nonumber\\
&\qquad\qquad\qquad\qquad\quad\qquad\qquad\qquad\qquad-C\kappa^{2}\left(\delta^{-\frac1{2}}\ell^{2}\kappa+\delta^{-\frac1{2}}\ell^{\alpha}\right)^{2}\int_{Q_{\ell}(x_0)}|\psi|^2dx\nonumber\\
&\geq (1-\delta)\mathcal
E_0(u,\sigma_{\ell}\overline{B}_{Q_{\ell}(x_0)}\Ab_{0};Q_{\ell}(x_0))-\widehat{C}\kappa^{2}\left(\delta^{-1}\ell^{4}\kappa^{2}+\delta^{-1}\ell^{2\alpha}+\delta\right)\int_{Q_{\ell}(x_0)}|\psi|^2\,dx\,.
\end{align}
This finishes the proof of the lemma.
\qed
\begin{prop}\label{prop-lb}
For all $\alpha\in(0,1)$, there exist positive
constants $C$, $\epsilon_{0}$ and $\kappa_0$ such that, if $(\kappa, H)$ satisfies \eqref{kappaH} holds, $\ell \in (0,\frac 12)$, $\epsilon\in(0,\epsilon_0)$, $(\psi,\Ab)\in H^1(\Omega;\C)\times H^1_{\Div}(\Omega)$ a
critical point of \eqref{eq-2D-GLf}, and
$\overline{Q_{\ell}(x_0)}\subset\left(\Omega \cap \{ |B_0| >\epsilon\}\right)$, then
$$\frac1{|Q_{\ell}(x_0)|}\mathcal E_0(\psi,\Ab;Q_{\ell}(x_0))\geq g\left(\displaystyle\frac{H}{\kappa}\overline{B}_{Q_{\ell}(x_0)}\right)\kappa^2-C\left(\ell^{3}\kappa^{2}+\ell^{2\alpha-1}+(\ell\kappa\epsilon)^{-1}+\ell\epsilon^{-1}\right)\kappa^{2}\,.$$
Here $g(\cdot)$ is the function introduced in \eqref{eq-g(b)},  and $\overline{B}_{Q_{\ell}(x_0)}$ is introduced in \eqref{sup_B0}.
\end{prop}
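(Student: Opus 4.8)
The plan is to feed Lemma~\ref{lem-lb} into a rescaling that turns the constant-field model energy into the functional $G^{\sigma_\ell}_{b,Q_R}$ of \eqref{eq-LF-2D}, and then to use the lower bound $m_0(b,R)\ge g(b)R^2$ packaged in Theorem~\ref{thm-thmd-AS}. Throughout write $\overline B=\overline B_{Q_\ell(x_0)}$, $\sigma=\sigma_\ell$ and $b=\frac H\kappa\overline B$. By Lemma~\ref{lem-lb}, $\mathcal E_0(\psi,\Ab;Q_\ell(x_0))$ is bounded below by $(1-\delta)\,\mathcal E_0(u,\sigma\overline B\Ab_0(\cdot-x_0);Q_\ell(x_0))$ minus $C\kappa^2(\delta^{-1}\ell^{2\alpha}+\delta^{-1}\ell^4\kappa^2+\delta)\int_{Q_\ell(x_0)}|\psi|^2$, where $u=e^{-i\kappa H\varphi}\psi$; since $\|\psi\|_\infty\le1$ by \eqref{eq-psi<1}, the last integral is $\le\ell^2=|Q_\ell(x_0)|$.

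First I would rescale by $y=\sqrt{\kappa H\overline B}\,(x-x_0)$, $v(y)=u(x)$. Using $\Ab_0(x-x_0)=(\kappa H\overline B)^{-1/2}\Ab_0(y)$ one finds the kinetic term is scale invariant, while the bulk terms pick up a factor $\frac{\kappa^2}{\kappa H\overline B}=\frac1b$; hence $\mathcal E_0(u,\sigma\overline B\Ab_0(\cdot-x_0);Q_\ell(x_0))=\frac1b\,G^{\sigma}_{b,Q_R}(v)$ with $R=\ell\sqrt{\kappa H\overline B}$. The point of this normalization is that $\frac1b\,g(b)R^2=g(b)\kappa^2\ell^2$ reproduces exactly the main term $g(b)\kappa^2|Q_\ell(x_0)|$. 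Moreover $\|v\|_\infty\le1$, and combining \eqref{eq-est1} with the closeness of $\Ab$ to $\sigma\overline B\Ab_0(\cdot-x_0)+\nabla\varphi$ obtained inside the proof of Lemma~\ref{lem-lb} yields the pointwise bound $|(\nabla-i\sigma\Ab_0)v|\le C\overline B^{-1/2}\le C\epsilon^{-1/2}$ on $Q_R$; this is where the hypothesis $|B_0|>\epsilon$ enters.

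The difficulty is that $v$ does not vanish on $\partial Q_R$, so $G^{\sigma}_{b,Q_R}(v)$ cannot be compared to $m_0(b,R)$ directly. I would insert a cutoff $\chi$ equal to $1$ inside $Q_R$ and vanishing near $\partial Q_R$, with transition layer of width $O(1)$ in $y$, and estimate $G^{\sigma}_{b,Q_R}(v)\ge G^{\sigma}_{b,Q_R}(\chi v)-(\text{layer error})\ge m_0(b,R)-(\text{layer error})$, the second inequality holding for either sign of $\sigma$ by Remark~\ref{m_0bar}. Expanding $|(\nabla-i\sigma\Ab_0)(\chi v)|^2$, the terms $(\chi^2-1)|(\nabla-i\sigma\Ab_0)v|^2$ and $(\chi^4-1)|v|^4$ have the right sign; what remains are $b\int|\nabla\chi|^2|v|^2$, $\int(1-\chi^2)|v|^2$ and the cross term $b\int\nabla(\chi^2)\cdot\RE\big(\bar v\,(\nabla-i\sigma\Ab_0)v\big)$. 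All three are supported on a layer of area $O(R)$; bounding them with $\|v\|_\infty\le1$ and the pointwise gradient bound $C\epsilon^{-1/2}$, dividing by $b\ell^2$ and using $R\approx\ell\kappa\overline B^{1/2}$ and $b\approx\overline B\ge\epsilon$, each contributes at most $C(\ell\kappa\epsilon)^{-1}\kappa^2$. I expect the cross term to be the main obstacle: a mere $L^2$ control of the gradient would be too lossy, and it is precisely the pointwise estimate \eqref{eq-est1} that saves it.

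Finally I would apply $m_0(b,R)\ge g(b)R^2$: for $b\le1$ this is the left inequality of \eqref{eq-remainder} (valid once $R\ge R_0$), and for $b\ge1$ it holds because $m_0(b,R)=0=g(b)R^2$ by part (1) of Theorem~\ref{thm-thmd-AS}; in the leftover range $R<R_0$ one has $\ell\kappa\epsilon^{1/2}\lesssim1$, so $C(\ell\kappa\epsilon)^{-1}\kappa^2$ already swamps the inequality and it holds trivially. Since $g(b)\le0$ we have $(1-\delta)g(b)\kappa^2\ell^2\ge g(b)\kappa^2\ell^2$, so choosing $\delta=\ell\epsilon^{-1}$ (when $\ell<\epsilon$; otherwise $\ell\epsilon^{-1}\kappa^2$ already dominates) converts the Lemma errors into at most $\ell^{2\alpha-1}$ and $\ell^3\kappa^2$ and supplies the term $\ell\epsilon^{-1}$. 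Collecting the four contributions and dividing by $|Q_\ell(x_0)|=\ell^2$ gives the claimed inequality.
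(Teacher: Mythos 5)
Your proposal is correct and follows essentially the same route as the paper's own proof: Lemma~\ref{lem-lb}, the rescaling with $b=\frac H\kappa\overline B_{Q_\ell(x_0)}$ and $R=\ell\sqrt{\kappa H\overline B_{Q_\ell(x_0)}}$ turning the model energy into $\frac1b G^{\sigma_\ell}_{b,Q_R}(v)$, a boundary cutoff whose layer errors (in particular the cross term) are controlled by the pointwise gradient bound coming from \eqref{eq-est1}, comparison with $m_0(b,R)$ via Remark~\ref{m_0bar}, and finally $m_0(b,R)\ge g(b)R^2$. The only deviations are bookkeeping choices (your $\delta=\ell\epsilon^{-1}$ versus the paper's $\delta=\ell$, and your more explicit treatment of the $b\ge1$ and $R<R_0$ edge cases, which the paper glosses over), all of which are harmless.
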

\begin{proof}~\\
We use Lemma~\ref{lem-lb} and the inequality $\|\psi\|_\infty\leq 1$ to obtain,
\begin{align}
\mathcal E_0(\psi,\Ab;Q_{\ell}(x_0))\geq (1-\delta)\mathcal
E_0(u,\sigma_{\ell}\overline{B}_{Q_{\ell}(x_0)}&\Ab_{0}(x-x_{0});Q_{\ell}(x_0))\nonumber\\
&-C\kappa^{2}\left(\delta^{-1}\ell^{4}\kappa^{2}+\delta^{-1}\ell^{2\alpha}+\delta\right)|Q_{\ell}(x_0)|\,.\label{eq-2D-GLlb1}
\end{align}
Let
\begin{equation}\label{defbR}
b=\displaystyle\frac{H}{\kappa}\overline{B}_{Q_{\ell}(x_0)}\,,\qquad R=\ell\sqrt{\kappa H \overline{B}_{Q_{\ell}(x_0)}}\,.
\end{equation}
Define the rescaled function,
\begin{equation}\label{defv}
v(x)=u\left(\frac{\ell}{R}x+x_{0}\right)\,,\qquad\quad\forall~x\in Q_{R}\,.
\end{equation}
Remember that $\sigma_{\ell}$ denotes the sign of $B_{0}$ in $Q_{\ell}(x_0)$. The change of variable $ y=\frac{R}{\ell}
(x-x_{0})$ gives:
\begin{align}
\mathcal E_{0}(u,\sigma_{\ell}\overline{B}_{Q_{\ell}(x_0)}\Ab_0(x-x_0)&;Q_{\ell}(x_0))\nonumber\\
&=\int_{Q_{R}}\left(\Big|\left(\frac{R}{\ell}\nabla_{y}-i\sigma_{\ell}\frac{R}{\ell}\Ab_{0}(y)\right)v\Big|^{2}-\kappa^{2}|v|^{2}+\frac{\kappa^2}{2}|v|^{4}\right)\,\frac{\ell}{R}dy\nonumber\\
&=\int_{Q_{R}}\left(|(\nabla_{y}-i\sigma_{\ell}\Ab_{0})v|^{2}-\frac{\kappa}{H\overline{B}_{Q_{\ell}(x_0)}}|v|^{2}+\frac{\kappa}{2H\overline{B}_{Q_{\ell}(x_0)}}|v|^{4}\right)\,dy\nonumber\\
&=\frac{\kappa}{H\overline{B}_{Q_{\ell}(x_0)}}\int_{Q_{R}}b\left(|(\nabla_{y}-i\sigma_{\ell}\Ab_0) v|^2-|v|^2+\frac1{2}|v|^4\right)\,dy\nonumber\\
&=\frac1{b}\,G^{\sigma_{\ell}}_{\,b\,,Q_{R}}(v)\,.\label{eq-2D-GLlb2}
\end{align}
We still need to estimate from below the reduced energy
$G^{\sigma_{\ell}}_{\,b\,,Q_{R}}(v)$. Since $v$ is not in $H^1_0(Q_{R})$, we
introduce a cut-off function $\chi_{R}\in C_{c}^{\infty}(\R^{2})$
such that
\begin{equation}\label{defxR}
0\leq\chi_{R}\leq 1\quad{\rm in~}\R^2\,,\quad {\rm supp}\,\chi_{R}\subset Q_{R}\,,\quad  \chi_{R}=1\quad{\rm in~}
Q_{R-1}\,,\quad{\rm and}\quad |\nabla\chi_{R}|\leq M~{\rm
in}~\R^{2}\,.
\end{equation}
The constant $M$ is universal.\\
 Let
\begin{equation}\label{defuR}
u_{R}=\chi_{R}\,v\,.
\end{equation}
We have,
\begin{align}
G^{\sigma_{\ell}}_{b,\,Q_{R}}(v)&=\int_{Q_{R}}\left(b|(\nabla-i\sigma_{\ell}\Ab_0)v|^2-|v|^2+\frac1{2}|v|^4\right)\,dx\nonumber\\
&\geq\int_{Q_{R}}\left(b|\chi_{R}(\nabla-i\sigma_{\ell}\Ab_0)v|^2-|\chi_{R}v|^2+\frac1{2}|v|^4+(\chi_{R}^2-1)|v|^2\right)\,dx\nonumber\\
&\geq G^{\sigma_{\ell}}_{b\,,Q_{R}}(\chi_{R} v)-\int_{Q_{R}}(1-\chi_{R}^2)|v|^2dx-2\int_{Q_{R}}\Big|\langle(\nabla-i\sigma_{\ell}\Ab_0)\chi_{R}
v\,,\,\nabla\chi_{R}v\rangle\Big|\,dy\,.\label{est-of-G}
\end{align}
Having in mind  \eqref{defv} and \eqref{defu}, we get,
$$
\Big|\Big(\nabla_{y}-i\sigma_{\ell}\Ab_0(y)\Big)v(y)\Big|= \frac{\ell}{R}\Big|\Big(\nabla_{x}-i\kappa
H\sigma_{\ell}\overline{B}_{Q_{\ell}(x_0)}\Ab_0(x-x_0)\Big)u(x)\Big|\,.
$$
Using the estimate in \eqref{eq-est1}, \eqref{eq-2D-est-g} and
\eqref{lem-F} we get,
\begin{align}\label{app A_0}
\Big|\Big(\nabla_{y}-i\sigma_{\ell}\Ab_0(y)\Big)v(y)\Big|&\leq\frac{\ell}{R}\Big|\Big(\nabla_{x}-i\kappa
H\sigma_{\ell}\overline{B}_{Q_{\ell}(x_0)}(\Ab+\nabla\varphi)\Big)u(x)\Big|\nonumber\\
&\qquad\qquad\qquad\qquad+\frac{\kappa
H\ell}{R}\Big|(\Ab-\sigma_{\ell}\overline{B}_{Q_{\ell}(x_0)}\Ab_{0}(x-x_0)-\nabla\varphi)u(x)\Big|\nonumber\\
&\leq \frac {C_{1}\ell} {R}\left(\kappa+\kappa\ell^{\alpha}+\kappa^{2}\ell^{2}\right)\,.
\end{align}
From the definition of $u_{R}$ in \eqref{defuR} and $\chi_{R}$ in \eqref{defxR} we get,
\begin{equation}\label{v<1}
|v|\leq1\,.
\end{equation} 
Using \eqref{v<1}, \eqref{app A_0} and the definition of $\chi_{R}$ in \eqref{defxR}, we get:
\begin{equation}\label{A(y)}
\int_{Q_{R}}\Big|\langle(\nabla-i\sigma_{\ell}\Ab_0)\chi_{R}
v\,,\,\nabla\chi_{R}v\rangle\Big|\,dy\leq C_{1}\left(\kappa\ell+\kappa\ell^{\alpha+1}+\kappa^{2}\ell^{3}\right)\,,
\end{equation}
and 
\begin{align}
\int_{Q_{R}}(1-\chi_{R}^2)|v|^2dx&\leq |Q_{R}\setminus Q_{R-1}|\nonumber\\
&\leq R\,.\label{xR-1}
\end{align}
Inserting \eqref{A(y)} and \eqref{xR-1} into \eqref{est-of-G}, we get,
\begin{align*}
G^{\sigma_{\ell}}_{b\,,Q_{R}}(v)&\geq G^{\sigma_{\ell}}_{b\,,Q_{R}}(u_{R})-C_{2}\left(\kappa\ell+\kappa\ell^{\alpha+1}+\kappa^{2}\ell^{3}+\kappa\ell\sqrt{\epsilon}\right)\,,\\
&\geq G^{\sigma_{\ell}}_{b\,,Q_{R}}(u_{R})-C_{2}\left(\kappa\ell(\sqrt{\epsilon}+1)+\kappa^{2}\ell^{3}\right)\,.
\end{align*}
There are two cases:\\

{\it Case 1:}\hskip0.5cm $\sigma_{\ell}=+1$, when $B_{0}>0,\qquad \text{in~} Q_{\ell}(x_0)$.\\

{\it Case 2:}\hskip0.5cm $\sigma_{\ell}=-1$, when $B_{0}<0,\qquad\text{in}~ Q_{\ell}(x_0)$.\\

In Case~1, after recalling the definition of $m_{0}(b,R)$ introduced in \eqref{m_0}, where $b$ introduced in \eqref{defbR} we get,
\begin{equation}\label{eq-2D-GLlb3}
G^{+1}_{b\,,Q_{R}}(v)\geq m_0(b,R)-C_{2}\left(\kappa\ell(\sqrt{\epsilon}+1)+\kappa^{2}\ell^{3}\right)\,.
\end{equation}
We get by collecting the estimates in
\eqref{eq-2D-GLlb1}-\eqref{eq-2D-GLlb3}:
\begin{align}
\frac1{|Q_{\ell}(x_0)|}\mathcal
E_0(\psi,\Ab;Q_{\ell}(x_0))&\geq\frac{(1-\delta)}{b\ell^{2}}\left(m_0(b,R)-C_{2}\left(\kappa\ell+\kappa^{2}\ell^{3}(\epsilon+1)\right)\right)\nonumber\\
&\qquad\qquad\qquad\qquad\qquad\qquad-C\left(\delta^{-1}\ell^{4}\kappa^{2}+\delta^{-1}\ell^{2\alpha}+\delta\right)\kappa^{2}\nonumber\\
&\geq \frac{(1-\delta)}{b\ell^2}
m_0(b,R)-r(\kappa)\label{eq-2D-GLlb4}\,,
\end{align}
 where
\begin{equation}\label{eq-2D-r1}
r(\kappa)=C_{3}\left(\delta^{-1}\ell^{4}\kappa^{4}+\delta^{-1}\ell^{2\alpha}\kappa^{2}+\delta\kappa^{2}
+\frac {1} {b\ell^{2}}\left(\kappa\ell(\sqrt{\epsilon}+1)+\kappa^{2}\ell^{3}\right)\right)\,.
\end{equation}
Theorem \ref{thm-thmd-AS} tells us that $m_0(b,R)\geq R^2g(b)$ for
all $b\in[0,1]$ and $R$ sufficiently large. Here $g(b)$ is
introduced in \eqref{eq-g(b)}. Therefore, we get from
\eqref{eq-2D-GLlb4} the estimate,
\begin{equation}\label{eq-2D-GLlb-b<1}
\frac1{|Q_{\ell}(x_0)|}\mathcal E_0(\psi,\Ab;Q_{\ell}(x_0))\geq
\left(\frac{(1-\delta)R^{2}}{b\ell^{2}}\right) g(b)- r(\kappa)\,,
\end{equation}
with $b$ defined in \eqref{defbR}.
By choosing $\delta=\ell$ and using that $\overline{Q_{\ell}}\subset\{|B_{0}|>\epsilon\}$, we get,
\begin{equation}
r(\kappa)=\mathcal
O\left(\ell^{3}\kappa^{2}+\ell^{2\alpha-1}+\frac1{\epsilon}\Big((\ell\kappa)^{-1}+\ell\Big)\right)\kappa^{2}\,.
\end{equation}
This implies that,
$$\frac1{|Q_{\ell}(x_0)|}\mathcal E_0(\psi,\Ab;Q_{\ell}(x_0))\geq g\left(\displaystyle\frac{H}{\kappa}\overline{B}_{Q_{\ell}(x_0)}\right)\kappa^2-C\left(\ell^{3}\kappa^{2}+\ell^{2\alpha-1}+(\ell\kappa\epsilon)^{-1}+\ell\epsilon^{-1}\right)\kappa^{2}\,.$$
Similarly, in Case~2, according to Remark~\ref{m_0bar}, we get that,
$$G^{-1}_{b\,,Q_{R}}(v)\geq m_0(b,R)-C_{2}\left(\kappa\ell+\kappa^{2}\ell^{3}(\epsilon+1)\right)\,,$$
and the rest of the proof is as for Case~1.
\end{proof}
\section{Proof of Theorem \ref{thm-2D-main}}
\subsection {Upper bound}
\begin{prop}\label{prop-ub-c0}
There exist positive
constants $C$ and $\kappa_0$ such that, if \eqref{kappaH} holds, then the ground state energy $\E0(\kappa,H)$ in \eqref{eq-2D-gs}
satisfies,
$$\E0(\kappa, H)\leq \kappa^2\int_{\Omega}g\left(\frac{H}{\kappa}|B_{0}(x)|\right)\,dx+C\kappa^{\frac{15}{8}}\,.$$
\end{prop}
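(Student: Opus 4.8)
The plan is to prove this upper bound by constructing an explicit competitor $(\psi,\Ab)$ and bounding $\mathcal E_{\kappa,H}(\psi,\Ab)$ from above, since $\E0(\kappa,H)\le\mathcal E_{\kappa,H}(\psi,\Ab)$ for any admissible pair. I choose $\Ab=\Fb$, which is admissible by \eqref{div-curlF} and makes the magnetic term $\kappa^2H^2\int_\Omega|\curl\Fb-B_0|^2\,\md x$ vanish, so that $\mathcal E_{\kappa,H}(\psi,\Fb)=\mathcal E_0(\psi,\Fb)$. The order parameter $\psi$ is built by tiling $\Omega$ with small squares and pasting together rescaled minimizers of the model energy $G^{\sigma}_{b,Q_R}$ of \eqref{eq-LF-2D}. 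Throughout I fix the scales $\ell=\kappa^{-3/4}$ and $\epsilon=\kappa^{-1/8}$; this choice is dictated by the error analysis at the end.

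\textbf{Construction of the competitor.} Let $\{x_j\}$ be the vertices of the lattice $\ell\Z^2$ and keep the \emph{good} squares, i.e.\ those with $\overline{Q_\ell(x_j)}\subset\Omega\cap\{|B_0|>\epsilon\}$. On a good square put $\sigma_j=\operatorname{sign}B_0$, and let $b_j$ and $R_j$ be as in \eqref{defbR} with $x_0=x_j$. Let $v_j\in H^1_0(Q_{R_j};\C)$ be a minimizer realizing $m_0(b_j,R_j)$ (one takes $v_j\equiv0$ when $b_j\ge1$, since then $m_0(b_j,R_j)=0$ by Theorem~\ref{thm-thmd-AS}(1)); such a minimizer satisfies $|v_j|\le1$. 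Undoing the rescaling \eqref{defv} and inserting the gauge factor $e^{i\kappa H\varphi_0}$ of \eqref{lem-F}, I set $\psi_j(x)=e^{i\kappa H\varphi_0(x)}\,v_j\!\big(\tfrac{R_j}{\ell}(x-x_j)\big)$ on $Q_\ell(x_j)$ and $\psi=\sum_j\psi_j$, extended by $0$ outside the good squares. Because each $v_j$ has zero boundary trace, $\psi_j$ vanishes near $\partial Q_\ell(x_j)$, so $\psi\in H^1(\Omega;\C)$; note that, contrary to the lower bound of Proposition~\ref{prop-lb}, no cut-off is required here, precisely because the $v_j$ already lie in $H^1_0$.

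\textbf{Reduction to the model energy.} As the interiors of the $\overline{Q_\ell(x_j)}$ are pairwise disjoint and $\Ab=\Fb$, one has $\mathcal E_0(\psi,\Fb)=\sum_j\mathcal E_0(\psi_j,\Fb;Q_\ell(x_j))$. On each good square I run the computation of Lemma~\ref{lem-lb} in the reverse direction: expanding $|(\nabla-i\kappa H\Fb)\psi_j|^2$, using the bound $|\Fb-\sigma_j\overline{B}_{Q_\ell(x_j)}\Ab_0(x-x_j)-\nabla\varphi_0|\le C\ell^2$ from \eqref{lem-F}, and Cauchy--Schwarz with a parameter $\delta\in(0,1)$, gives
\[
\mathcal E_0(\psi_j,\Fb;Q_\ell(x_j))\le(1+\delta)\,\mathcal E_0\big(e^{-i\kappa H\varphi_0}\psi_j,\sigma_j\overline{B}_{Q_\ell(x_j)}\Ab_0(x-x_j);Q_\ell(x_j)\big)+C\kappa^2\big(\delta^{-1}\ell^4\kappa^2+\delta\big)\!\int_{Q_\ell(x_j)}\!|\psi_j|^2\,\md x.
\]
The change of variables \eqref{eq-2D-GLlb2} identifies the first term on the right with $\tfrac1{b_j}G^{\sigma_j}_{b_j,Q_{R_j}}(v_j)=\tfrac1{b_j}m_0(b_j,R_j)$, and since $R_j^2/b_j=\kappa^2\ell^2=\kappa^2|Q_\ell(x_j)|$, Theorem~\ref{thm-thmd-AS}(5) yields
\[
\frac1{b_j}\,m_0(b_j,R_j)\le\kappa^2|Q_\ell(x_j)|\,g(b_j)+\frac{C R_j}{b_j}.
\]
Summing over the good squares and taking $\delta=\ell$, the main term is the Riemann sum $\kappa^2\sum_j|Q_\ell(x_j)|\,g\big(\tfrac H\kappa\overline{B}_{Q_\ell(x_j)}\big)$, to be compared with $\kappa^2\int_\Omega g\big(\tfrac H\kappa|B_0|\big)\,\md x$.

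\textbf{Error analysis and the main difficulty.} There are four error contributions, of which I expect the treatment of the zero set of $B_0$ to be the crux. First, replacing $\overline{B}_{Q_\ell(x_j)}$ by $|B_0(x)|$ inside $g$ costs, by the Lipschitz continuity of $g$ and $|\overline{B}_{Q_\ell(x_j)}-B_0(x)|\le C\ell$, a term $\mathcal O(\kappa^2\ell)=\mathcal O(\kappa^{5/4})$. Second, the in-square field-variation error $C\kappa^2(\delta^{-1}\ell^4\kappa^2+\delta)$ with $\delta=\ell$ summed over the squares is $\mathcal O(\kappa^4\ell^3)=\mathcal O(\kappa^{7/4})$. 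Third, the model-to-limit error $\sum_j CR_j/b_j$ is controlled, using $R_j\le C\ell\kappa$, $b_j\ge\Lambda_{\min}\epsilon$ on good squares, and that the number of good squares is $\le C\ell^{-2}$, by $C\kappa/(\ell\epsilon)=\mathcal O(\kappa^{15/8})$. Fourth, the region not covered by good squares is the union of an $\mathcal O(\ell)$-neighborhood of $\partial\Omega$ and the set $\{|B_0|\le\epsilon\}$ thickened by $\mathcal O(\ell)$; there $\psi=0$, so the competitor contributes nothing while the target $\kappa^2\int g$ is bounded in absolute value by $\tfrac12\kappa^2$ times the measure of this set. By Assumption~\eqref{B(x)} the set $\{B_0=0\}$ is a finite union of smooth curves with $|\nabla B_0|>0$ on it, whence $|\{|B_0|\le\epsilon\}|\le C\epsilon$ and this contribution is $\mathcal O(\kappa^2\epsilon)=\mathcal O(\kappa^{15/8})$. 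Collecting the four terms and inserting $\ell=\kappa^{-3/4}$, $\epsilon=\kappa^{-1/8}$ gives the claimed bound $C\kappa^{15/8}$. The hard part is the fourth term: it is absent when $B_0$ does not vanish (which is why $\tau_0=\tfrac74$ in that case), and it is precisely the balance between $\kappa^2\epsilon$ here and $\kappa/(\ell\epsilon)$ in the third term that forces $\epsilon=\kappa^{-1/8}$ and produces the exponent $\tfrac{15}{8}$.
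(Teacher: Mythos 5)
Your construction is essentially the paper's own proof: the paper also takes the test configuration $(v,\Fb)$ (so the magnetic term vanishes), tiles $\Omega$ by the lattice $\ell\Z^2$ with $\ell=\kappa^{-3/4}$, discards the squares meeting $\{|B_0|\le\epsilon\}$ with $\epsilon=\kappa^{-1/8}$, pastes gauge-twisted rescaled minimizers of $m_0(b,R)$ on the good squares, expands the kinetic term with a parameter $\delta=\ell$ using Lemma~\ref{app F}, and invokes \eqref{eq-remainder}; your four error terms and their sizes reproduce the paper's \eqref{est-en-v,F}.

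There is, however, one deviation that creates a (fixable) gap. You take $b_j$ and $R_j$ from \eqref{defbR}, i.e.\ built from the supremum $\overline{B}_{Q_\ell(x_j)}$, so your main term is an \emph{upper} Riemann sum, and to compare it with $\kappa^2\int_\Omega g$ you invoke ``the Lipschitz continuity of $g$''. That property is not among those listed in Theorem~\ref{thm-thmd-AS}, and it does not follow from them: a function such as $h(b)=-\tfrac12(1-\sqrt b\,)^2$ on $[0,1]$ is continuous, non-decreasing, concave, satisfies \eqref{eq-prop-g(b)}, yet has infinite slope at $b=0$. So a global Lipschitz constant is unavailable, and your claimed error $\mathcal O(\kappa^2\ell)$ for this step is unjustified as written. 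The repair is easy: on good squares both arguments of $g$ lie in $[\Lambda_{\min}\epsilon,\infty)$, and for a concave non-decreasing function with range $[-\tfrac12,0]$ the slopes on $[\eta,\infty)$ are bounded by $\tfrac1{2\eta}$; hence the replacement of $\overline{B}_{Q_\ell(x_j)}$ by $|B_0(x)|$ costs $\mathcal O\big(\kappa^2\ell/\epsilon\big)=\mathcal O(\kappa^{11/8})$, still below $\kappa^{15/8}$, and your proof closes. Alternatively — and this is what the paper does in \eqref{def-bR} — define $b_j$ from the infimum $\underline{B}_{\gamma,\ell}$ instead of the supremum: then the sum is a \emph{lower} Riemann sum of the non-decreasing integrand $g\big(\tfrac H\kappa|B_0|\big)$ on each square, and monotonicity of $g$ alone gives $\sum_j \ell^2 g\big(\tfrac H\kappa\underline{B}_{\gamma,\ell}\big)\le\int_{\Omega_{\ell,\epsilon}}g\big(\tfrac H\kappa|B_0(x)|\big)\,dx$, with no regularity of $g$ needed at all.
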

\begin{proof}
Let $\ell=\ell(\kappa)$ and $\epsilon=\epsilon(\kappa)$ be  positive parameters such that
$\kappa^{-1}\ll \ell\ll1$ and $\kappa^{-1}\ll \epsilon\ll1$ as $\kappa\to\infty$. For some $ \beta \in (0,1)$, $\mu \in(0,1)$ to be determined later,  we will choose
\begin{equation}\label{defellepsilon}
\ell=\kappa^{-\beta}\,,\,\,\,\epsilon=\kappa^{-\mu}\,.
\end{equation}
Consider the  lattice $\Gamma_{\ell} :=\ell\Z\times\ell\Z$ and write for $\gamma \in \Gamma_{\ell}$,
$Q_{\gamma,\ell}=Q_{\ell}(\gamma)$. For any $\gamma\in \Gamma_{\ell}$ such that  $\overline{Q_{\gamma,\ell}}\subset\Omega  \cap \{|B_{0}|>\epsilon\}$
let
\begin{equation}\label{sousB}
\underline{B}_{\gamma,\ell}=\displaystyle\inf_{x\in
Q_{\gamma,\ell}}|B_{0}(x)|\,.
\end{equation}
Let
$$ \mathcal
I_{\ell,\epsilon}=\Big\{\gamma~:~\overline{Q_{\gamma,\ell}}\subset\Omega  \cap \{|B_{0}|>\epsilon\}\Big\}\,,$$

$$N={\rm Card}\,\mathcal I_{\ell,\epsilon}\,,$$
and
$$\Omega_{\ell,\epsilon}=\text{int}\left(\displaystyle{\cup_{\gamma\in \mathcal
I_{\ell,\epsilon}}}\overline{Q_{\gamma,\ell}}\right)\,.
$$
It follows from \eqref{B(x)} that:
$$ N=|\Omega|\ell^{-2}+ \mathcal O (\epsilon \ell^{-2}) +{\mathcal O}(\ell^{-1}) \mbox{ as } \ell\to 0 \mbox{ and } \epsilon \to 0\,.
$$
Let
\begin{equation}\label{def-bR}
b=\displaystyle\frac{H}{\kappa}\underline{B}_{\gamma,\ell}\,,\, R=\ell\sqrt{\kappa H \underline{B}_{\gamma,\ell}}\,,
\end{equation}
and $u_{R}$ be a minimizer of the functional in \eqref{eq-LF-2D}, i.e.
$$m_{0}(b,R)=\int_{Q_{R}}\left(b|(\nabla-i\Ab_{0})u_{R}|^2-|u_{R}|^2+\frac12|u_{R}|^4\right)\,dx\,.$$
We will need the function $\varphi_{\gamma}$ introduced in Lemma~\ref{app F} which satisfies
$$|\Fb(x)-\sigma_{\gamma,\ell}\underline{B}_{\gamma,\ell}\Ab_{0}(x-\gamma)-\nabla\varphi_{\gamma}(x)|\leq
C\ell^{2}\,,\,\,~\rm{in}\,\,Q_{\gamma,\ell}\,,$$ where $\sigma_{\gamma,\ell}$ is the sign of $B_{0}$ in $Q_{\gamma,\ell}$.\\
We define the function,
$$v(x)=\begin{cases}e^{-i\kappa H \varphi_{\gamma}}u_{R}\Big(\displaystyle\frac{R}{\ell}(x-\gamma)\Big) &\text{if}\,\, x \in  Q_{\gamma,\ell}\subset\{B_{0}>\epsilon\}\\
e^{-i\kappa H \varphi_{\gamma}}\overline{u_{R}}\Big(\displaystyle\frac{R}{\ell}(x-\gamma)\Big) &\text{if}\,\, x \in  Q_{\gamma,\ell}\subset\{B_{0}<-\epsilon\}\\
0&\text{if}\,\, x \in  \Omega\setminus\Omega_{\ell,\epsilon}
\end{cases}
\,.$$
Since $u_{R}\in H^1_0(Q_{R})$, then $v \in H^1(\Omega)$. We compute the energy
of the configuration $(v,\Fb)$. We get,
\begin{align}\label{eq-ub-C0}
\mathcal E(v,\Fb)&=\int_{\Omega}\left(|(\nabla-i\kappa
H\Fb)v|^2-\kappa^2|v|^2+\frac{\kappa^2}2|v|^4\right)\,dx\nonumber\\
&=\sum_{\gamma \in {\mathcal{I_{\ell,\epsilon}}}}\mathcal
E_{0}(v,\Fb;Q_{\gamma,\ell})\,.
\end{align}
We estimate the term $\mathcal E_0(v,\Fb;Q_{\gamma,\ell})$ from
above and we write:
\begin{align}
\mathcal
E_0(v,\Fb;Q_{\gamma,\ell})&=\int_{Q_{\gamma,\ell}}|(\nabla-i\kappa H\Fb)v|^2-\kappa^2|v|^2+\frac{\kappa^2}{2}|v|^4\,dx\nonumber\\
&=\int_{Q_{\gamma,\ell}}\Big|\Big(\nabla-i\kappa
H\big(\sigma_{\gamma,\ell}\underline{B}_{\gamma,\ell}\Ab_{0}(x-\gamma)+\nabla\varphi_{\gamma}(x)\big)\Big)v\nonumber\\
&\qquad\qquad\qquad-i\kappa H\big(\Fb-\sigma_{\gamma,\ell}\underline{B}_{\gamma,\ell}\Ab_{0}(x-\gamma)-\nabla\varphi_{\gamma}(x)\big)\Big)v\Big|^2-\kappa^2|v|^2+\frac{\kappa^2}{2}|v|^4\,dx\nonumber\\
&\leq\int_{Q_{\gamma,\ell}}(1+\delta)\Big|\Big(\nabla-i\kappa
H\big(\sigma_{\gamma,\ell}\underline{B}_{\gamma,\ell}\Ab_{0}(x-\gamma)+\nabla\varphi_{\gamma}(x)\big)\Big)v\Big|^2-\kappa^2|v|^2+\frac{\kappa^2}{2}|v|^4dx\nonumber\\
&\qquad\qquad\quad+C(1+\delta^{-1})(\kappa H)^2\int_{Q_{\gamma,\ell}}\Big|\big(\Fb-\sigma_{\gamma,\ell}\underline{B}_{\gamma,\ell}\Ab_{0}(x-\gamma)-\nabla\varphi_{\gamma}(x)\big)\Big)v\Big|^2\,dx\nonumber\\
&\leq(1+\delta)\mathcal E_0(e^{-i\kappa
H\varphi_{\gamma}}v,\sigma_{\gamma,\ell}\underline{B}_{\gamma,\ell}\Ab_{0}(x-\gamma);Q_{\gamma,\ell})+C(\delta\kappa^{2}+\delta^{-1}\kappa^{4}\ell^{4})\int_{Q_{\gamma,\ell}}|v|^{2}d\,x\label{E2-ub}\,.
\end{align}
Having in mind that $u_{R}$ is a minimizer of the functional in (\ref{eq-LF-2D}), and using the estimate in (\ref{eq-psi<1}) we get:
$$\int_{Q_{\gamma,\ell}}|v|^{2}d\,x\leq |Q_{\gamma,\ell}|\,.$$ 
Remark~\ref{m_0bar} and a change of variables give us,
$$\int_{Q_{\gamma,\ell}}\left(|(\nabla-i\kappa H\sigma_{\gamma,\ell}(\underline{B}_{\gamma,\ell}\Ab_{0}(x-\gamma))e^{-i\kappa
H\varphi_{\gamma}}v|^2-\kappa^2|v|^2+\frac{\kappa^2}2|v|^4\right)\,dx
=\frac{m_0(b,R)}{b}\,.$$
We insert this into \eqref{E2-ub} to obtain,
\begin{equation}\label{E0<m0}
\mathcal E_0(v,\Fb;Q_{\gamma,\ell})\leq (1+\delta)\frac{m_0(b,R)}{b}+C(\delta\kappa^{2}+\delta^{-1}\kappa^{4}\ell^{4})\ell^{2}\,.
\end{equation}
We know from Theorem~\ref{thm-thmd-AS} that $m_0(b,R)\leq g(b)
R^2+CR$ for all $b\in[0,1]$ and $R$ sufficiently large, where $b$ introduced in \eqref{def-bR}. We choose $\delta=\ell$ in \eqref{E0<m0}. That way we get,
\begin{equation}\label{E0<g}
\mathcal E_0(v,\Fb;Q_{\gamma,\ell})\leq g\left(\frac{H}{\kappa}\underline{B}_{\gamma,\ell}\right)\ell^{2}\kappa^{2}+C\left(\frac{1}{\kappa\ell\sqrt{\epsilon}}+\ell+\kappa^{2}\ell^{3}\right)\ell^{2}\kappa^2\,.
\end{equation}
Summing \eqref{E0<g} over $\gamma$ in $I_{\ell,\epsilon}\,,$ we recognize the lower Riemann sum of $x\rightarrow g\left(\displaystyle\frac{H}{\kappa}|B_{0}(x)|\right)$. By monotonicity of $g$, $g$ is Riemann-integrable and its integral is larger than any lower Riemann sum. Thus:
\begin{equation}
\mathcal E(v,\Fb)\leq\left(\int_{\Omega_{\ell,\epsilon}}g\left(\frac{H}{\kappa}|B_{0}(x)|\right)\,dx\right)\kappa^{2}+C\left(\frac{1}{\kappa\ell\sqrt{\epsilon}}+\ell+\kappa^{2}\ell^{3}\right)\kappa^2\,.
\end{equation}
Notice that  using the regularity of  $\partial \Omega$ and \eqref{B(x)}, there exists $C>0$ such that:
\begin{equation}\label{estaire}
|\Omega\setminus\Omega_{\ell,\epsilon}|=\mathcal{O}\left(\ell|\partial\Omega|+C\epsilon\right)\,,
\end{equation}
as $\epsilon$ and $\ell$ tend to $0$.\\

Thus, we get by using the properties of $g$ in Theorem~\ref{thm-thmd-AS},
$$
\int_{\Omega_{\ell,\epsilon}}g\left(\frac{H}{\kappa}|B_{0}(x)|\right)\,dx\leq\int_{\Omega}g\left(\frac{H}{\kappa}|B_{0}(x)|\right)\,dx+\frac{1}{2}|\Omega\setminus\Omega_{\ell,\epsilon}|.
$$
This implies that,
\begin{align}\label{est-en-v,F}
\mathcal E(v,\Fb)&\leq\int_{\Omega}g\left(\frac{H}{\kappa}|B_{0}(x)|\right)\,dx+C\left(\frac{1}{\kappa\ell\sqrt{\epsilon}}+\ell+\epsilon+\kappa^{2}\ell^{3}\right)\kappa^2\,.
\end{align}
We choose in \eqref{defellepsilon}
\begin{equation}\label{beta-mu}
\beta=\frac{3}{4}~{\rm and}~ \mu=\frac{1}{8}.
\end{equation}
With this choice, we infer from \eqref{est-en-v,F},
\begin{equation}
\mathcal E(v,\Fb)\leq\int_{\Omega}g\left(\frac{H}{\kappa}|B_{0}(x)|\right)\,dx+C_{1}\kappa^{\frac{15}{8}}\,.
\end{equation}
This finishes the proof of Proposition~\ref{prop-ub-c0}.
\end{proof}

\begin{rem}
In the case when $B_{0}$ does not vanish in $\Omega$, $\epsilon$ disappears and $\{x\in\Omega; |B_{0}(x)|>0\}=\Omega$. Consequently, the Ginzburg-Lundau energy of $(v,\Fb)$ in \eqref{eq-GLen-D} satisfies:

$$\mathcal E(v,\Fb)\leq\int_{\Omega}g\left(\frac{H}{\kappa}|B_{0}(x)|\right)\,dx+C\left(\frac{1}{\kappa\ell}+\ell+\kappa^{2}\ell^{3}\right)\kappa^2\,.$$

We take the same choice of $\beta$ as in \eqref{beta-mu}, then the ground state energy $\E0(\kappa,H)$ in \eqref{eq-2D-gs}
satisfies,
$$\E0(\kappa, H)\leq \kappa^2\int_{\Omega}g\left(\frac{H}{\kappa}|B_{0}(x)|\right)\,dx+C\kappa^{\frac{7}{4}}\,.$$
\end{rem}

\subsection{Lower bound}\label{lower bound}
We now establish a lower bound for the ground state energy $\E0(\kappa,H)$ in \eqref{eq-2D-gs}.
The parameters $\epsilon$ and $\ell$ have the same form as in \eqref{beta-mu}.\\
Let
\begin{equation}\label{overB(x)}
\overline{B}_{\gamma,\ell}=\sup_{x\in Q_{\gamma,\ell}}|B_{0}(x)|\,,
\end{equation}
and
\begin{equation}\label{def-b-R}
b_{\gamma,\ell}=\displaystyle\frac{H}{\kappa}\overline{B}_{\gamma,\ell}\,,\, R=\ell\sqrt{\kappa H \overline{B}_{\gamma,\ell}}\,,
\end{equation}
If
$(\psi,\Ab)$ is a minimizer of \eqref{eq-2D-GLf}, we have,
\begin{equation}\label{eq-glob-en}
\E0(\kappa,H)=\mathcal E_0(\psi,\Ab;\Omega_{\ell,\epsilon})+\mathcal
E_0(\psi,\Ab;\Omega\setminus\Omega_{\ell,\epsilon}) + (\kappa H)^2
\int_{\Omega} |\curl\big(\Ab - \Fb\big)|^2\,dx \,,
\end{equation}
where, for any $D\subset\Omega$, the energy $\mathcal
E_0(\psi,\Ab;D)$ is introduced in \eqref{eq-GLe0}. Since the magnetic energy term is positive, we may write,
\begin{equation}\label{eq-glob-en1}
\E0(\kappa,H)\geq\mathcal E_0(\psi,\Ab;\Omega_{\ell,\epsilon})+\mathcal
E_0(\psi,\Ab;\Omega\setminus\Omega_{\ell,\epsilon})\,.
\end{equation}
Thus, we get by using \eqref{eq-psi<1},
\eqref{eq-est1}, and \eqref{estaire}:
\begin{align}
 \left|\mathcal E_0(\psi,\Ab;\Omega\setminus\Omega_{\ell,\epsilon})\right|&\leq\int_{\Omega\setminus\Omega_{\ell,\epsilon}}|(\nabla-i\kappa
H\Ab)\psi|^2+\kappa^2|\psi|^2+\frac{\kappa^2}{2}|\psi|^4\,dx\nonumber\\
&\leq|\Omega\setminus\Omega_{\ell,\epsilon}|\left(C_1\kappa^{2}\|\psi\|_{L^{\infty}(\Omega)}^{2}+\kappa^{2}\|\psi\|_{L^{\infty}(\Omega)}^{2}+\frac{\kappa^{2}}{2}\|\psi\|_{L^{\infty}(\Omega)}^{4}\right)\nonumber\\
&\leq C_2(\ell+\epsilon)\kappa^2\,\label{eq-en-bnd}.
\end{align}
To estimate $\mathcal E_0(\psi,\Ab;\Omega_{\ell,\epsilon})$, we notice that,
$$\mathcal E_0(\psi,\Ab;\Omega_{\ell,\epsilon})=\sum_{\gamma\in\mathcal I_{\ell,\epsilon}}\mathcal E_0(\psi,\Ab;Q_{\gamma,\ell})\,.$$
Using Proposition~\ref{prop-lb} with $\alpha=\frac{2}{3}$ and \eqref{eq-en-bnd} with $\beta=\frac{3}{4}$ and $\mu=\frac{1}{8}$ in \eqref{defellepsilon}, we get,
\begin{align}
\mathcal E_0(\psi,\Ab;\Omega_{\ell,\epsilon})&\geq\sum_{\gamma\in\mathcal
I_{\ell,\epsilon}}
g\left(\frac{H}{\kappa}\overline{B}_{Q_{\ell}(x_0)}\right)\ell^2\kappa^2-C\left(\ell^{3}\kappa^{2}+\ell^{2\alpha-1}+(\ell\kappa\epsilon)^{-1}+\ell\epsilon^{-1}\right)\kappa^2\nonumber\\
&\geq\kappa^{2}\sum_{\gamma\in\mathcal
I_{\ell,\epsilon}}g\left(\frac{H}{\kappa}\overline{B}_{Q_{\ell}(x_0)}\right)\ell^2-C_{1}\kappa^{\frac{15}{8}}\nonumber,
\end{align}
and
\begin{equation}\label{final-est-error}
\mathcal E_0(\psi,\Ab;\Omega\setminus\Omega_{\ell,\epsilon})\geq-C_{2}\kappa^{\frac{15}{8}}\,.
\end{equation}
As for the upper bound, we can use the monotonicity of $g$ and recognize that the sum above is an  upper Riemann sum of $g$. In that way, we get,
$$
\mathcal
E_0(\psi,\Ab;\Omega)\geq\kappa^{2}\int_{\Omega_{\ell,\epsilon}}g\left(\frac{H}{\kappa}|B_{0}(x)|\right)\,dx-C_{1}\kappa^{\frac{15}{8}}\,.
$$
Recalling the assumption $\Omega_{\ell,\epsilon}\subset\Omega$ and that $g\leq0$, we deduce that,
\begin{equation}\label{eq-en-blk}
\mathcal E_0(\psi,\Ab;\Omega)\geq\kappa^{2}\int_{\Omega}g\left(\frac{H}{\kappa}|B_{0}(x)|\right)\,dx-C_{1}\kappa^{\frac{15}{8}}\,.
\end{equation}
Putting \eqref{final-est-error} and \eqref{eq-en-blk} into \eqref{eq-glob-en1}, which finishes the proof of Theorem~\ref{thm-2D-main}.
\begin{rem}\label{rm-lower}
When $B_{0}$ does not vanish, the local energy in $Q_{\ell}(x_0)$ in Proposition~\ref{prop-lb} becomes:
$$\frac1{|Q_{\ell}(x_0)|}\mathcal E_0(\psi,\Ab;Q_{\ell}(x_0))\geq g\left(\displaystyle\frac{H}{\kappa}\overline{B}_{Q_{\ell}(x_0)}\right)\kappa^2-C\left(\ell^{3}\kappa^{2}+\ell^{2\alpha-1}+(\ell\kappa)^{-1}\right)\kappa^{2}\,.$$
Similarly, using $\alpha=\frac{2}{3}$, $\ell=\kappa^{-\frac{3}{4}}$ and replacing the upper Riemann sum  by the integral, we get:
$$\E0(\kappa,H)\geq\kappa^{2}\int_{\Omega}g\left(\frac{H}{\kappa}|B_{0}(x)|\right)\,dx-C\kappa^{\frac{7}{4}}\,.$$
\end{rem}
\subsection{Proof of Corollary \ref{corol-2D-main}}
If $(\psi,\Ab)$ is a minimizer of \eqref{eq-2D-GLf}, we have,
\begin{equation}
\mathcal E(\psi,\Ab;\Omega)=\mathcal E_{0}(\psi,\Ab;\Omega)+(\kappa H)^{2}\int_{\Omega}|\curl(\Ab-\Fb)|^{2}\,dx.
\end{equation}
Remark~\ref{rm-lower} and \eqref{eq-en-blk} tell us that,
\begin{equation}\label{lower-E_0}
\mathcal
E_0(\psi,\Ab;\Omega)\geq\kappa^{2}\int_{\Omega_{\ell,\epsilon}}g\left(\frac{H}{\kappa}|B_{0}(x)|\right)\,dx-C\kappa^{\tau_0}\,.
\end{equation}
Using Theorem~\ref{thm-2D-main} and \eqref{lower-E_0} we get,
$$
\kappa^{2}\int_{\Omega}g\left(\frac{H}{\kappa}|B_{0}(x)|\right)\,dx-C\kappa^{\tau_0}+(\kappa H)^{2}\int_{\Omega}|\curl(\Ab-\Fb)|^{2}\,dx\leq\kappa^{2}\int_{\Omega}g\left(\frac{H}{\kappa}|B_{0}(x)|\right)\,dx+C_{2}\kappa^{\tau_{0}}.
$$
This implies that,
\begin{equation}\label{eq-mag-en}
(\kappa H)^{2}\int_{\Omega}|\curl(\Ab-\Fb)|^{2}\,dx\leq C'\kappa^{\tau_{0}}\,.
\end{equation}
\section{Local Energy Estimates}
The object of this section is to give an estimates to the Ginzburg-Landau energy \eqref{eq-GLen-D} in the open set $D\subset\Omega$.
\subsection{Main statements}
\begin{thm}\label{thm-2D-mainloc}
There exist positive
constants $\kappa_0$ such that if \eqref{kappaH} is true and $D \subset \Omega$ is an open set, then the local energy  of the minimizer  satisfies,
\begin{equation}\label{eq-2Dl-thm}
 \left|\mathcal E(\psi,\Ab;\mathcal D) -\kappa^2\int_{D}g\left(\displaystyle
\frac{H}{\kappa}\,|B_{0}(x)|\right)\,dx\right| = o(\kappa^2)\,.
\end{equation}
\end{thm}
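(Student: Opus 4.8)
The plan is to reduce the local statement to the \emph{global} estimate of Theorem~\ref{thm-2D-main} by exploiting the minimality of $(\psi,\Ab)$ and the additivity of the energy. Since $\mathcal E(\psi,\Ab;D)$ and $\mathcal E(\psi,\Ab;\Omega)$ both carry the same (full) magnetic contribution $(\kappa H)^2\int_{\Omega}|\curl(\Ab-\Fb)|^2\,dx$, while the densities add over $D$ and $\Omega\setminus D$, one has the exact identity
\begin{equation}
\mathcal E(\psi,\Ab;\Omega)=\mathcal E(\psi,\Ab;D)+\mathcal E_0(\psi,\Ab;\Omega\setminus D)\,.
\end{equation}
Because $(\psi,\Ab)$ is a minimizer, the left-hand side is the ground state energy, which Theorem~\ref{thm-2D-main} controls from above by $\kappa^2\int_{\Omega}g\left(\tfrac{H}{\kappa}|B_0(x)|\right)dx+o(\kappa^2)$. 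The whole argument then rests on producing a matching \emph{lower} bound for the local energy on an arbitrary subdomain, applied both to $D$ and to $\Omega\setminus\overline{D}$.

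For the lower bound on $\mathcal E(\psi,\Ab;D)$ I would discard the nonnegative magnetic term and estimate $\mathcal E_0(\psi,\Ab;D)$ from below by the tiling procedure already used in the lower-bound part of the proof of Theorem~\ref{thm-2D-main}. Cover $D\cap\{|B_0|>\epsilon\}$ by the lattice squares $Q_{\gamma,\ell}$ with $\ell=\kappa^{-3/4}$, $\epsilon=\kappa^{-1/8}$, apply Proposition~\ref{prop-lb} with $\alpha=\tfrac23$ on each square, and on the leftover set $D\setminus D_{\ell,\epsilon}$ (where $D_{\ell,\epsilon}$ is the union of these squares) use the pointwise bound $e(\psi,\Ab)\ge-\kappa^2|\psi|^2\ge-\kappa^2$ together with $\|\psi\|_\infty\le1$. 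Monotonicity of $g$ turns $\sum_\gamma g\left(\tfrac{H}{\kappa}\overline{B}_{\gamma,\ell}\right)\ell^2$ into an upper Riemann sum dominating $\int_{D_{\ell,\epsilon}}g\,dx$, and since $g\le0$ one has $\int_{D_{\ell,\epsilon}}g\ge\int_D g$. Collecting the per-square errors of Proposition~\ref{prop-lb}, which sum to $O(\kappa^{15/8})$, and the leftover contribution $O(|D\setminus D_{\ell,\epsilon}|)\kappa^2$, gives
\begin{equation}
\mathcal E(\psi,\Ab;D)\ge\mathcal E_0(\psi,\Ab;D)\ge\kappa^2\int_{D}g\left(\tfrac{H}{\kappa}|B_0(x)|\right)dx-o(\kappa^2)\,.
\end{equation}

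For the matching upper bound I would run the identical tiling lower bound on the open set $\Omega\setminus\overline{D}$, obtaining $\mathcal E_0(\psi,\Ab;\Omega\setminus D)\ge\kappa^2\int_{\Omega\setminus D}g\left(\tfrac{H}{\kappa}|B_0|\right)dx-o(\kappa^2)$, and then subtract from the additivity identity. Combined with the global upper bound of Theorem~\ref{thm-2D-main}, the integral over $\Omega$ splits and cancels the complement integral, leaving
\begin{equation}
\mathcal E(\psi,\Ab;D)=\mathcal E(\psi,\Ab;\Omega)-\mathcal E_0(\psi,\Ab;\Omega\setminus D)\le\kappa^2\int_{D}g\left(\tfrac{H}{\kappa}|B_0(x)|\right)dx+o(\kappa^2)\,.
\end{equation}
Together with the previous display this is precisely \eqref{eq-2Dl-thm}.

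The main obstacle is that $D$ is an \emph{arbitrary} open set, so neither $\partial D$ nor the vanishing set $\{B_0=0\}$ carries quantitative regularity. Consequently the leftover region $D\setminus D_{\ell,\epsilon}$ (and its analogue for $\Omega\setminus\overline D$) is controlled only through $|D\setminus D_{\ell,\epsilon}|\to0$ as $\kappa\to\infty$, with no uniform rate; this is exactly why the remainder must be stated as $o(\kappa^2)$ rather than the explicit power $\kappa^{15/8}$ of the global theorem. A related point of care is the passage from $\Omega\setminus\overline D$ to $\Omega\setminus D$, which costs $\kappa^2|\partial D\cap\Omega|$ and must be absorbed into $o(\kappa^2)$; this is harmless when $|\partial D|=0$, and otherwise requires treating the boundary layer explicitly. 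Finally one checks that the Riemann-sum approximation of $\int_D g$ converges for merely open $D$: this follows from boundedness of $g$, so the sums differ from the integral by at most $\tfrac12|D\setminus D_{\ell,\epsilon}|=o(1)$.
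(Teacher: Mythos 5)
Your lower-bound half is exactly the paper's own argument (tiling $D\cap\{|B_0|>\epsilon\}$ with Proposition~\ref{prop-lb}, Riemann sums, $g\leq0$), and your subtraction idea for the upper bound is correct and clean whenever $|\partial D\cap\Omega|=0$; it even dispenses with Corollary~\ref{corol-2D-main}, since with the definition \eqref{eq-GLen-D} the magnetic term appears identically in $\mathcal E(\psi,\Ab;\Omega)$ and $\mathcal E(\psi,\Ab;D)$ and cancels in the additivity identity. However, the theorem is stated for an \emph{arbitrary} open $D\subset\Omega$, and there your route has a genuine gap at exactly the point you flag and then defer. Writing $\Omega\setminus D=(\Omega\setminus\overline{D})\cup(\partial D\cap\Omega)$, your tiling controls only the open set $\Omega\setminus\overline{D}$; on $\partial D\cap\Omega$ the only available estimate is the pointwise one, $e(\psi,\Ab)\geq-\kappa^2|\psi|^2+\frac{\kappa^2}{2}|\psi|^4\geq-\frac{\kappa^2}{2}$, so the subtraction yields
\begin{equation*}
\mathcal E(\psi,\Ab;D)\leq \kappa^2\int_{D}g\left(\tfrac{H}{\kappa}|B_0|\right)dx
+\kappa^2\left(\int_{\partial D\cap\Omega}g\left(\tfrac{H}{\kappa}|B_0|\right)dx+\tfrac12\,|\partial D\cap\Omega|\right)+o(\kappa^2)\,.
\end{equation*}
The bracketed term is of order $|\partial D\cap\Omega|$ in general: for instance, where $\frac{H}{\kappa}|B_0|\geq1$ the integrand $g$ vanishes on $\partial D$, while nothing in your argument forbids $|\psi|\approx1$ there, i.e.\ energy density $\approx-\kappa^2/2$. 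Open sets with $|\partial D\cap\Omega|>0$ exist (complements of fat Cantor-type sets), so this is not a technicality. Nor can it be repaired inside the subtraction strategy: ruling out the bad scenario on $\partial D$ is itself an upper bound on the local energy near $\partial D$ — the very thing being proved — and domain monotonicity is unavailable because $e(\psi,\Ab)$ is signed.

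The paper never meets this obstacle because it never looks at the complement. It proves a local upper bound directly (Proposition~\ref{prop-ub}) by trial-function surgery: inside each lattice square $Q_{\gamma,\ell}\subset D\cap\{|B_0|>\epsilon\}$, the minimizer $\psi$ is replaced by the scaled, gauged reference minimizer $u_R$, glued to $\psi$ across a thin collar by a cutoff $\eta_R$, and global minimality $\mathcal E(\psi,\Ab)\leq\mathcal E(w,\Ab)$ forces $\mathcal E_0(\psi,\Ab;Q_{\gamma,\ell})\leq(1+\delta)\,m_0(b,R)/b+\text{errors}$. Summing over squares contained in $D$, the leftover region $D\setminus D_{\ell,\epsilon}$ is a \emph{subset of} $D$ (so $\partial D$ never enters), its measure tends to $0$, and its energy is $O(\kappa^2|D\setminus D_{\ell,\epsilon}|)=o(\kappa^2)$; the magnetic term is then handled by Corollary~\ref{corol-2D-main}. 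To salvage your write-up you must either add the hypothesis $|\partial D\cap\Omega|=0$, or prove a per-square upper bound of the type of Proposition~\ref{prop-ub} — at which point you have reproduced the paper's proof.
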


For all $(\ell,x_0)$ such that $\overline{Q_{\ell}(x_0)}\subset\Omega\cap\{|B_{0}|>\epsilon\}$, we define
\begin{equation}\label{infB}
\underline{B}_{Q_{\ell}(x_0)}=\displaystyle\inf_{x\in Q_{\ell}(x_0)}|B_{0}(x)|\,,
\end{equation}
where $B_{0}$ is introduced in \eqref{B(x)}.
\begin{prop}\label{prop-ub}
For all $\alpha\in(0,1)$, there exist positive
constants $C$ and $\kappa_0$ such that if \eqref{kappaH} is true, $\ell \in (0,\frac 12)$, $(\psi,\Ab)\in H^1(\Omega;\C)\times H^1_{\Div}(\Omega)$ is
a minimizer of \eqref{eq-2D-GLf}, and $\overline{Q_{\ell}(x_0)}\subset\Omega\cap\{|B_0|>\epsilon\} $, then,
$$\frac1{|Q_{\ell}(x_0)|}\mathcal E_0(\psi,\Ab;Q_{\ell}(x_0))\leq g\left(\displaystyle\frac{H}{\kappa}\underline{B}_{Q_{\ell}(x_0)}\right)\kappa^2+C\left(\ell^{3}\kappa^{2}+\ell^{2\alpha-1}+(\ell\kappa\sqrt{\epsilon})^{-1}\right)\kappa^2\,.$$
Here $g(\cdot)$ is the function introduced in \eqref{eq-g(b)} and $\mathcal E_0$ is
the functional in \eqref{eq-GLe0}.
\end{prop}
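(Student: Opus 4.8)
The plan is to establish an upper bound for the local energy that mirrors the lower bound already proved in Proposition~\ref{prop-lb}, but where the roles of supremum and infimum of $|B_0|$ are interchanged (here we compare against $\underline{B}_{Q_\ell(x_0)}$), and where the test-function comparison runs in the opposite direction. Since $(\psi,\Ab)$ is now assumed to be a \emph{minimizer} rather than merely a critical point, the key new ingredient is that we may feed in a competitor configuration. First I would use the \emph{a priori} estimates from Theorem~\ref{thm-2D-ad-est}---precisely \eqref{eq-est1}, \eqref{eq-2D-est-g} and \eqref{lem-F}---to replace the true potential $\Ab$ by the model potential $\sigma_\ell \underline{B}_{Q_\ell(x_0)}\Ab_0(x-\gamma)$ up to a gauge transformation $e^{-i\kappa H\varphi}$, exactly as in the construction of $\varphi$ in Lemma~\ref{lem-lb}. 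This gauges away $\Ab$ and reduces $\mathcal E_0(\psi,\Ab;Q_\ell(x_0))$ to an expression involving the model functional $G^{\sigma_\ell}_{b,Q_R}$ with $b=\frac{H}{\kappa}\underline{B}_{Q_\ell(x_0)}$ and $R=\ell\sqrt{\kappa H\,\underline{B}_{Q_\ell(x_0)}}$ as in \eqref{def-b-R} and \eqref{infB}.

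Second, I would perform the rescaling $v(x)=u(\frac{\ell}{R}x+x_0)$ of \eqref{defv} to pass to the normalized square $Q_R$, yielding the identity $\mathcal E_0(u,\sigma_\ell \underline{B}_{Q_\ell(x_0)}\Ab_0;Q_\ell(x_0))=\frac{1}{b}G^{\sigma_\ell}_{b,Q_R}(v)$ of \eqref{eq-2D-GLlb2}. The crucial difference from Proposition~\ref{prop-lb} is the direction of the estimate: instead of bounding $G^{\sigma_\ell}_{b,Q_R}(v)$ below by $m_0(b,R)$, I must bound it \emph{above}. This is where the minimizer hypothesis enters. Since $(\psi,\Ab)$ minimizes the \emph{global} functional, its restricted energy on $Q_\ell(x_0)$ cannot be too large; but to convert this into a pointwise square bound I would compare against the configuration that agrees with $(\psi,\Ab)$ outside $Q_\ell(x_0)$ and is replaced inside by the gauged model minimizer. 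The minimality then forces $\mathcal E_0(\psi,\Ab;Q_\ell(x_0))\leq \mathcal E_0(\text{model};Q_\ell(x_0))+(\text{gluing errors})$, and the model energy is controlled above by $\frac{1}{b}m_0(b,R)$, which by \eqref{eq-remainder} is at most $\frac{1}{b}(g(b)R^2+CR)$.

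Third, the gluing errors arise because the competitor must match $\psi$ on $\partial Q_\ell(x_0)$, so I would again introduce a cut-off $\chi_R$ as in \eqref{defxR} and estimate the cross terms $\int_{Q_R}|\langle(\nabla-i\sigma_\ell\Ab_0)\chi_R v,\nabla\chi_R v\rangle|\,dy$ and $\int_{Q_R}(1-\chi_R^2)|v|^2\,dx$ exactly as in \eqref{A(y)} and \eqref{xR-1}. Combining the gauge replacement error $C(\delta\kappa^2+\delta^{-1}\kappa^4\ell^4)\ell^2$ from the $(1+\delta)$-expansion (choosing $\delta=\ell$), the boundary layer term of order $R=\ell\sqrt{\kappa H\,\overline B}\approx \ell\kappa$, and the remainder $CR$ from \eqref{eq-remainder}, I would collect all contributions into the stated error $C(\ell^3\kappa^2+\ell^{2\alpha-1}+(\ell\kappa\sqrt\epsilon)^{-1})\kappa^2$, where the $\sqrt\epsilon$ in the denominator comes from the lower bound $|B_0|>\epsilon$ entering through $b$ and $R$ in the rescaling of the cross term.

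The main obstacle I anticipate is the direction of the $m_0$ comparison. In the lower bound it suffices to drop the cut-off error and invoke $m_0(b,R)\geq g(b)R^2$ cleanly; here, to get an \emph{upper} bound on the genuine local energy of the minimizer, I cannot simply insert the model minimizer as a global competitor because it would alter $\Ab$ and hence the magnetic energy term $(\kappa H)^2\int_\Omega|\curl(\Ab-\Fb)|^2$ as well. The careful point is therefore to test with $(\psi',\Ab)$ where only $\psi$ is modified inside $Q_\ell(x_0)$, keeping $\Ab$ fixed, so that the magnetic term is untouched and minimality reduces to a comparison of the $\mathcal E_0$ densities alone; one must verify that the gauged model minimizer, extended by $\psi$ outside, indeed lies in $H^1(\Omega;\C)$ and that the Dirichlet matching on $\partial Q_\ell(x_0)$ produces only the controlled $\chi_R$-errors above. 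Verifying this admissibility and tracking that no spurious boundary energy appears is the delicate step; everything else is a rerun of the computations in Lemma~\ref{lem-lb} and Proposition~\ref{prop-lb} with inequalities reversed.
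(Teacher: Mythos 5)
Your proposal is correct and follows essentially the same route as the paper's proof: the paper likewise keeps $\Ab$ fixed, replaces $\psi$ inside $Q_{\ell}(x_0)$ by the gauged, rescaled minimizer $u_R$ of $G^{+1}_{b,Q_R}$ (conjugated when $\sigma_{\ell}=-1$), glues it to $\psi$ through a cut-off transition layer so that the configuration is admissible, and then invokes $m_0(b,R)\leq g(b)R^2+CR$ with the choice $\delta=\ell$ to collect the stated errors. The subtlety you flagged---modifying only $\psi$ so that the magnetic term $(\kappa H)^2\int_{\Omega}|\curl(\Ab-\Fb)|^2\,dx$ appears identically on both sides of the minimality comparison and cancels---is exactly how the paper's key inequality \eqref{eq-GLub} is exploited.
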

\begin{proof}
As explained earlier in the proof of Lemma~\ref{lem-lb} in \eqref{A-F}, we may
suppose after performing a gauge transformation that the magnetic
potential $\Ab$ satisfies,
\begin{equation}\label{eq-2D-est-g'}
|\Ab(x)-\Fb(x)|\leq C\frac{\ell^{\alpha}}{H}\,,\quad\forall~x\in Q_{\ell}(x_0)\,.
\end{equation}
Let
\begin{equation}\label{defb-R}
b=\displaystyle\frac{H}{\kappa}\underline{B}_{Q_{\ell}(x_0)},\,\,\,R=\ell\sqrt{\kappa H\underline{B}_{Q_{\ell}(x_0)}}\,,
\end{equation}
and $u_{R}\in H^1_0(Q_{R})$ be the minimizer of the functional $G^{+1}_{b,Q_{R}}$
introduced in \eqref{eq-LF-2D}. Let $\chi_{R}\in
C_c^\infty(\R^2)$ be a cut-off function such that,
$$0\leq\chi_{R}\leq 1\quad{\rm in~}\R^2\,,\quad {\rm supp}\,\chi_{R}\subset Q_{R+1}\,,\quad  \chi_{R}=1\quad{\rm in~}
Q_{R}\,,$$ and  $|\nabla\chi_{R}|\leq C$
for some universal constant $C$.\\
Let $\eta_{R}(x)=1-\chi_{R}\left(\displaystyle\frac{R}{\ell}(x-x_0)\right)$ for all $x\in\R^2$ and $\widetilde{\ell}=\ell\left(1+\displaystyle\frac{1}{R}\right)$.\\
This implies that,
\begin{align}
&\eta_{R}(x)=0 \qquad\qquad {\rm in}\quad Q_{\ell}(x_0)\label{eta0}\\
&0\leq\eta_{R}(x)\leq1\qquad\, {\rm in}\quad Q_{\widetilde{\ell}}(x_0)\setminus Q_{\ell}(x_0)\label{0eta1}\\
&\eta_{R}(x)=1\qquad\qquad {\rm  in}\quad\Omega\setminus
Q_{\widetilde{\ell}}(x_0)\label{eta1}\,.
\end{align}
Consider the function $w(x)$ defined as follows,
$$w(x)=\eta_{R}(x)\psi(x)\,\,\,\,\,\,\,\, ~{\rm in} ~\,\,\Omega\setminus Q_{\ell}(x_0),$$
and, if $x\in Q_{\ell}(x_{0})$,
$$w(x)=\left\{
  \begin{array}{ll}
    e^{i\kappa H\varphi}u_{R}\left(\displaystyle\frac{R}{\ell}(x-x_0)\right) &\quad {\rm if}\quad Q_{\ell}(x_0)\subset\{B_{0}>\epsilon\} \\
    e^{i\kappa H\varphi}\overline{u}_{R}\left(\displaystyle\frac{R}{\ell}(x-x_0)\right) &\quad {\rm if}\quad Q_{\ell}(x_0)\subset\{B_{0}<-\epsilon\} \,.
  \end{array}
\right.$$
Notice that by construction, $w=\psi$ in $\Omega\setminus
Q_{\widetilde{\ell}}(x_0)$. We will
prove that, for any $\delta\in(0,1)$ and $\alpha\in(0,1)$,
\begin{equation}\label{eq-GLub}
\mathcal E(w,\Ab;\Omega)\leq \mathcal E(\psi,\Ab;\Omega\setminus
Q_{\ell}(x_0))+(1+\delta)\frac{\ell}{bR} m_0(b,R)+ r_0(\kappa)\ell^{2}\,,
\end{equation}
and for some constant
$C$, $r_0(\kappa)$ is given as follows,
\begin{equation}\label{eq-ub-r0}
r_0(\kappa)=C\left(\delta+\delta^{-1}\ell^4\kappa^{2}+\delta^{-1}\ell^{2\alpha}+\frac{1}{\ell\kappa\sqrt{\epsilon}}\right)\kappa^{2}\,.
\end{equation}
{\bf Proof of \eqref{eq-GLub}:}
With $\mathcal E_0$  defined in
\eqref{eq-GLe0}, we write,
\begin{equation}\label{eq-E=E1+E2}
\mathcal E_0(w,\Ab;\Omega)=\mathcal E_1+\mathcal
E_2\,,\end{equation} where
\begin{equation}\label{eq-E1+E2}
\mathcal E_1=\mathcal E_0(w,\Ab;\Omega\setminus Q_{\ell}(x_0))\,,\quad
\mathcal E_2=\mathcal E_0(w,\Ab;Q_{\ell}(x_0))\,.
\end{equation}
We estimate $\mathcal E_1$ and $\mathcal E_2$ from above.
Starting with $\mathcal E_1$ and using \eqref{eta1}, we get,
\begin{align}
\mathcal E_1&=\int_{\Omega\setminus Q_{\ell}(x_0)}|(\nabla-i\kappa
H\Ab)\eta_{R}\psi|^{2}-\kappa^{2}|\eta_{R}\psi|^2+\frac{\kappa^2}{2}|\eta_{R}\psi|^4\,dx\nonumber\\
&=\int_{\Omega\setminus Q_{\ell}(x_0)}\eta_{R}^{2}|(\nabla-i\kappa
H\Ab)\psi|^{2}+|\nabla\eta_{R}\psi|^{2}+2R\langle\eta_{R}(\nabla-i\kappa
H\Ab)\psi\,,\,\nabla\eta_{R}\psi\rangle\nonumber\\
&\quad\quad\quad\quad\quad\quad\quad\quad\quad\quad\quad\quad\quad\quad\quad\quad\quad\quad\quad\quad\quad\quad\quad\quad\quad-\kappa^{2}\eta_{R}^2|\psi|^2+\frac{\kappa^2}{2}\eta_{R}^{4}|\psi|^4\,dx\nonumber\\
&=\mathcal E_0(\psi,\Ab;\Omega\setminus Q_{\ell}(x_0))+\mathcal
R(\psi,\Ab)\label{eq-E1-ub}\,,
\end{align}
where
\begin{multline*}
\mathcal R(\psi,\Ab)=\int_{Q_{\widetilde{\ell}}(x_0)\setminus
Q_{\ell}(x_0)}\bigg{(}(\eta_{R}^2-1)\left(|(\nabla-i\kappa H\Ab)\psi|^2-\kappa^2|\psi|^2\right)+|\psi\nabla\eta_{R}|^2+\frac{\kappa^2}2(\eta_{R}^4-1)|\psi|^4\\
+2\Re\langle  \eta_{R}(\nabla-i\kappa
H\Ab)\psi,\psi\nabla\eta_{R}\rangle\bigg{)}\,dx\,.
\end{multline*}
Noticing that $\left|Q_{\widetilde{\ell}}(x_0)\setminus
Q_{\ell}(x_0)\right|\leq\frac{\ell}{\sqrt{\kappa
H\underline{B}_{Q_{\ell}(x_0)}}}$ and using \eqref{0eta1} together with the
estimates in \eqref{eq-psi<1}, \eqref{kappaH}, \eqref{eq-est1} and $|\nabla\eta_{R}|\leq C\displaystyle\frac{R}{\ell}$, we get,
\begin{equation}\label{eq-est-R-ub}
|\mathcal R(\psi,\Ab)|\leq C\frac{\ell\kappa}{\sqrt{\epsilon}}\,.
\end{equation}
Inserting \eqref{eq-est-R-ub} in \eqref{eq-E1-ub}, we get the
following estimate,
\begin{equation}\label{eq-E1-ub1}
\mathcal E_1\leq\mathcal E_0(\psi,\Ab;\Omega\setminus
Q_{\ell}(x_0))+C\frac{\ell\kappa}{\sqrt{\epsilon}}\,.
\end{equation}

We estimate the term $\mathcal E_2$ in \eqref{eq-E1+E2}. We will
need the function $\varphi_{0}$ introduced in
Lemma~\ref{app F} and satisfying
$|\Fb(x)-\sigma_{\ell}\underline{B}_{Q_{\ell}(x_0)}\Ab_{0}(x-x_{0})-\nabla\varphi_{0}(x)|\leq
C\ell^{2}$ in $Q_{\ell}(x_0)$, where $\sigma_{\ell}$ denotes the sign of $B_{0}$. We start with the
kinetic energy term and write for any $\delta\in(0,1)$:
\begin{align}
\mathcal E_2&=\int_{Q_{\ell}(x_0)}\Big|\Big(\nabla-i\kappa
H(\sigma_{\ell}\underline{B}_{Q_{\ell}(x_0)}\Ab_{0}(x-x_{0})+\nabla\varphi(x))\Big)w\nonumber\\
&\qquad\qquad\qquad-i\kappa H\Big(\Ab-\big(\sigma_{\ell}\underline{B}_{Q_{\ell}(x_0)}\Ab_{0}(x-x_{0})+\nabla\varphi(x)\big)\Big)\Big|^2+\left(-\kappa^2|w|^2+\frac{\kappa^2}{2}|w|^4\right)\,dx\nonumber\\
&\leq\int_{Q_{\ell}(x_0)}(1+\delta)\Big|\Big(\nabla-i\kappa
H(\sigma_{\ell}\underline{B}_{Q_{\ell}(x_0)}\Ab_{0}(x-x_{0})+\nabla\varphi(x))\Big)w\Big|^2-\kappa^2|w|^2+\frac{\kappa^2}{2}|w|^4\,dx\nonumber\\
&\,+(1+\delta^{-1})(\kappa H)^{2}\int_{Q_{\ell}(x_0)}\Big|(\Ab-\nabla\phi_{x_{0}}-\Fb)w+(\Fb-\sigma_{\ell}\underline{B}_{Q_{\ell}(x_0)}\Ab_{0}(x-x_{0})-\nabla\varphi_{0}(x))w\Big|^2\,dx\label{E2-ub}\,.
\end{align}
Using the estimate in \eqref{eq-2D-est-g'} together with \eqref{kappaH} and \eqref{eq-psi<1}, we deduce  the upper bound,
\begin{equation}
\label{E2-ub1} \mathcal E_2\leq (1+\delta)
\mathcal E_0(e^{-i\kappa H\varphi}w,\sigma_{\ell}\underline{B}_{Q_{\ell}(x_0)}\Ab_{0}(x-x_{0}); Q_{\ell}(x_0))
+C(\delta^{-1}\ell^{2\alpha}+\delta^{-1}\ell^{4}\kappa^{2}+\delta)\kappa^{2}\ell^{2}\,,
\end{equation}
where $\alpha\in(0,1)$.\\
There are two cases:\\
\textbf{Case 1:} If $B_{0}>\epsilon$ in $Q_{\ell}(x_0)$, then $\sigma_{\ell}=+1$ and 
$$w(x)=\left\{
  \begin{array}{ll}
    e^{i\kappa H\varphi}u_{R}\left(\displaystyle\frac{R}{\ell}(x-x_0)\right) &\quad {\rm in}\quad Q_{\ell}(x_0) \\
    \eta_{R}(x)\psi(x) & \quad {\rm in}\quad\Omega\setminus
Q_{\ell}(x_0)\,.
  \end{array}
\right.$$
The change of variable $y=\displaystyle\frac{R}{\ell}
(x-x_0)$  and \eqref{defbR} gives us:
\begin{align}
\mathcal E_{0}(e^{-i\kappa H\varphi}w,\sigma_{\ell}\underline{B}_{Q_{\ell}(x_0)}&\Ab_0(x-x_0);Q_{\ell}(x_0))\nonumber\\
&=\int_{Q_{R}}\left(\Big|\left(\frac{R}{\ell}\nabla_{y}-i\frac{R}{\ell}\Ab_{0}(y)\right)u_{R}\Big|^{2}-\kappa^{2}|u_{R}|^{2}+\frac{\kappa^2}{2}|u_{R}|^{4}\right)\,\frac{\ell}{R}dy\nonumber\\
&=\int_{Q_{R}}\left(|(\nabla_y-i\Ab_{0}(y))u_{R}|^{2}-\frac{\kappa}{H\underline{B}_{Q_{\ell}(x_0)}}|u_{R}|^{2}+\frac{\kappa}{2H\underline{B}_{Q_{\ell}(x_0)}}|u_{R}|^{4}\right)\,dy\nonumber\\
&=\frac{\kappa}{H\underline{B}_{Q_{\ell}(x_0)}}\int_{Q_{R}}b\left(|(\nabla_{y}-i\Ab_0(y))u_{R}|^2-|u_{R}|^2+\frac1{2}|u_{R}|^4\right)\,dy\nonumber\\
&=\frac1{b}\,G^{+1}_{\,b\,,Q_{R}}(u_{R})\,,\label{eq-2D-GLlb2}
\end{align}
where $G^{+1}_{b,Q_{R}}$ is the functional from \eqref{eq-LF-2D}.\\
\textbf{Case 2:} If $B_{0}<-\epsilon$ in $Q_{\ell}(x_0)$, then $\sigma_{\ell}=-1$ and 
$$w(x)=\left\{
  \begin{array}{ll}
    e^{i\kappa H\varphi}\overline{u}_{R}\left(\displaystyle\frac{R}{\ell}(x-x_0)\right) &\quad {\rm in}\quad Q_{\ell}(x_0) \\
    \eta_{R}(x)\psi(x) & \quad {\rm in}\quad\Omega\setminus
Q_{\ell}(x_0)\,.
  \end{array}
\right.$$
Similarly, like in case 1, we have,
$$\mathcal E_{0}(e^{-i\kappa H\varphi}w,\sigma_{\ell}\underline{B}_{Q_{\ell}(x_0)}\Ab_{0}(x-x_0);Q_{\ell}(x_0))=\frac1{b}\,G^{-1}_{\,b\,,Q_{R}}(\overline{u}_{R})=\frac1{b}\,G^{+1}_{\,b\,,Q_{R}}(u_{R})\,.$$
In both cases we see that,
\begin{equation}\label{eq-2D-GLlb2}
\mathcal E_{0}(e^{-i\kappa H\varphi}w,\sigma_{\ell}\underline{B}_{Q_{\ell}(x_0)}\Ab_{0}(x-x_0);Q_{\ell}(x_0))=\frac1{b}\,G^{+1}_{\,b\,,Q_{R}}(u_{R})=\frac{m_0(b, R)}{b}.
\end{equation}
Inserting \eqref{eq-2D-GLlb2} into \eqref{E2-ub1}, we get,
\begin{equation}\label{E1-ub3}
\mathcal E_2\leq (1+\delta)\frac1{b}
m_0(b, R)+C(\delta+\delta^{-1}\ell^{4}\kappa^2+\delta^{-1}\ell^{2\alpha})\kappa^{2}\ell^{2}\,.
\end{equation}
Inserting \eqref{eq-E1-ub1} and \eqref{E1-ub3} into
\eqref{eq-E=E1+E2}, we deduce that,
\begin{equation}\label{eq-ub-E1+E2}
\mathcal E_0(\varphi,\Ab)\leq  \mathcal E_0(\psi,\Ab;\Omega\setminus
Q_{\ell}(x_0))+(1+\delta)\displaystyle\frac1{b} m_0(b, R)
+C(\delta+\delta^{-1}\ell^{4}\kappa^2+\delta^{-1}\ell^{2\alpha}\kappa^{2}+(\ell\kappa\sqrt{\epsilon})^{-1})\ell^2\kappa^2\,.
\end{equation}
This proves \eqref{eq-GLub}. Now, we show how \eqref{eq-GLub} proves Proposition \ref{prop-ub}. By definition of the minimizer
$(\psi,\Ab)$, we have,
$$\mathcal E(\psi,\Ab)\leq \mathcal E(\varphi,\Ab;\Omega)\,.$$
Since $\mathcal E(\psi,\Ab;\Omega)=\mathcal
E(\psi,\Ab;\Omega\setminus Q_{\ell}(x_0))+\mathcal
E_0(\psi,\Ab;Q_{\ell}(x_0))$, the estimate \eqref{eq-GLub} gives us,\\
$$\mathcal  E_0(\psi,\Ab;Q_{\ell}(x_0))\leq \frac{(1+\delta)}{b}
m_0(b,R)+ r_0(\kappa)\,.$$ Dividing both sides by
$|Q_{\ell}(x_0)|=\ell^2$ and remembering the definition of $r_0(\kappa)$,
we get,
\begin{equation}\label{eq-ub-final}\frac1{|Q_{\ell}(x_0)|}\mathcal E_0(\psi,\Ab,Q_{\ell}(x_0))\leq
\displaystyle\frac{(1+\delta)}{b\ell^2}m_0(b,R) +C
\left(\delta+\delta^{-1}\ell^4\kappa^{2}+\frac1{\ell\kappa\sqrt{\epsilon}}+\delta^{-1}\ell^{2\alpha}\right)\kappa^2\,.
\end{equation}
The inequality in \eqref{eq-remainder} tell us that $m_0(b,R)\leq R^{2}g(b)+CR$
for all $b\in[0,1]$ and $R$ sufficiently large. We substitute this
into \eqref{eq-ub-final} and we select $\delta=\ell$, so that
$$r_0(\kappa)=\kappa^2\mathcal
O\left((\ell\kappa\sqrt{\epsilon})^{-1}+\ell^{3}\kappa^{2}+\ell^{2\alpha-1}\right)\,.$$
Using \eqref{defbR} we get,
\begin{align}
\frac1{|Q_{\ell}(x_0)|}\mathcal E(\psi,\Ab,Q_{\ell}(x_0))&\leq
\frac{(1+\delta)R^{2}}{b\ell^{2}}g(b)+\frac{CR}{b\ell^{2}}+\kappa^{2}\mathcal
O\left((\ell\kappa\sqrt{\epsilon})^{-1}+\ell^{3}\kappa^{2}+\ell^{2\alpha-1}\right)\nonumber\\
&\leq
g\left(\displaystyle\frac{H}{\kappa}\underline{B}_{Q_{\ell}(x_0)}\right)\kappa^2+C\left((\ell\kappa\sqrt{\epsilon})^{-1}+\ell^{3}\kappa^{2}+\ell^{2\alpha-1}\right)\kappa^2\,.\nonumber
\end{align}
This establishes the result of Proposition~\ref{prop-ub}.
\end{proof}
\subsection{Proof of Theorem~\ref{thm-2D-mainloc}, upper bound}
The parameters $\ell$ and $\epsilon$ have the same form as in \eqref{defellepsilon} and we take the same choice of $\beta$ and $\mu$ as in \eqref{beta-mu}. Consider the  lattice $\Gamma_{\ell} :=\ell\Z\times\ell\Z$ and  write, for $\gamma \in \Gamma_{\ell}$, $Q_{\gamma,\ell}=Q_{\ell}(\gamma)$. For any $\gamma \in \Gamma_{\ell}$ such that $\overline{Q_{\ell}(\gamma)}\subset\Omega\cap\{|B_0|>\epsilon\}$, let:
$$ \mathcal
I_{\ell,\epsilon}(D)=\{\gamma~:~\overline{Q_{\gamma,\ell}}\subset D\cap\{|B_0|>\epsilon\}\}\,,\,\,\,\,\,\,\,
N={\rm Card}\,\mathcal I_{\ell,\epsilon}(D), $$
and
$$D_{\ell,\epsilon}=\text{int}\left(\displaystyle{\cup_{\gamma\in \mathcal
I_{\ell,\epsilon}(D)}}\overline{Q_{\gamma,\ell}}\right)\,.
$$
Notice that, by \eqref{B(x)},
$$ N=|D|\ell^{-2}+{\mathcal O}(\epsilon\ell^{-2}) +{\mathcal O}(\ell^{-1})\mbox{ as } \ell\to 0~\mbox{and}~\epsilon\to 0\,.
$$
If $(\psi,\Ab)$ is a minimizer of \eqref{eq-2D-GLf}, we have,
\begin{equation}\label{eq-loc-en}
\mathcal E(\psi,\Ab;D)=\mathcal E_0(\psi,\Ab;D_{\ell,\epsilon})+\mathcal
E_0(\psi,\Ab;D\setminus D_{\ell,\epsilon}) + (\kappa H)^2
\int_{\Omega} |\curl\big(\Ab - \Fb\big)|^2\,dx \,.
\end{equation}
Using Corollary~\ref{corol-2D-main}, we may write,
\begin{equation}\label{eq-loc-en1}
\mathcal E(\psi,\Ab;D)\leq\mathcal E_0(\psi,\Ab;D_{\ell,\epsilon})+\mathcal E_0(\psi,\Ab;D\setminus D_{\ell,\epsilon})+C\kappa^{\tau_0}\,.
\end{equation}
Here $\tau_0\in(1,2)$. Notice that
\begin{equation}\label{D-D_ell}
|D\setminus D_{\ell,\epsilon}|=\mathcal{O}\left(\ell|\partial D_{\ell,\epsilon}|+\epsilon\right)\,.
\end{equation}
We get by using \eqref{eq-psi<1} and
\eqref{eq-est1}:
\begin{align}
 \left|\mathcal E_0(\psi,\Ab;D\setminus D_{\ell,\epsilon})\right|&\leq|D\setminus D_{\ell,\epsilon}|\left(C_1\kappa^{2}\|\psi\|_{L^{\infty}(D)}^{2}+\kappa^{2}\|\psi\|_{L^{\infty}(D)}^{2}+\frac{\kappa^{2}}{2}\|\psi\|_{L^{\infty}(D)}^{4}\right)\nonumber\\
&\leq C_2(\ell+\epsilon)\kappa^2\,\label{loc-eq-en-bnd}.
\end{align}
To estimate $\mathcal E_0(\psi,\Ab;D_{\ell,\epsilon})$, we notice that,
$$\mathcal E_0(\psi,\Ab;D_{\ell,\epsilon})=\sum_{\gamma\in\mathcal I_{\ell,\epsilon}(D)}\mathcal E_0(\psi,\Ab;Q_{\gamma,\ell})\,.$$
Using Proposition~\ref{prop-ub} and the estimates in \eqref{loc-eq-en-bnd} with $\beta=\frac{3}{4}$, $\alpha=\frac{2}{3}$ and $\mu=\frac{1}{8}$, we get,
\begin{align}
\mathcal E_0(\psi,\Ab;D)&\leq\sum_{\gamma\in\mathcal
I_{\ell,\epsilon}(D)}g\left(\frac{H}{\kappa}\underline{B}_{Q_{\ell}(x_0)}\right)\kappa^2\ell^2+C\left(\ell^{3}\kappa^{2}+\ell^{2\alpha-1}+(\ell\kappa\sqrt{\epsilon})^{-1}+\epsilon\right)\kappa^2+C_{1}\kappa^{\tau_{0}}\nonumber\\
&\leq\kappa^{2}\sum_{\gamma\in\mathcal
I_{\ell,\epsilon}(D)}g\left(\frac{H}{\kappa}\underline{B}_{Q_{\ell}(x_0)}\right)\ell^2+C_{2}\kappa^{\tau_{0}}\nonumber\,,
\end{align}
where $$\underline{B}_{Q_{\ell}(x_0)}=\displaystyle\sup_{x\in Q_{\ell}(x_0)} B_{0}(x)\,.$$
Recognizing the lower Riemann sum of $x\mapsto g\left(\displaystyle \frac{H}{\kappa}B_{0}(x)\right)$, and using the monotonicity of $g$ we get:

\begin{equation}\label{E0<g-D}
\mathcal E_0(\psi,\Ab;D)\leq\kappa^{2}\int_{D_{\ell,\epsilon}}g\left(\displaystyle \frac{H}{\kappa}B_{0}(x)\right)\,dx+C_{2}\kappa^{\tau_{0}}\,.
\end{equation}
Thus, we get by using \eqref{D-D_ell} and the property of $g$ in Theorem~\ref{thm-thmd-AS},
$$\kappa^{2}\int_{D_{\ell,\epsilon}}g\left(\displaystyle \frac{H}{\kappa}B_{0}(x)\right)\,dx\leq\kappa^{2}\int_{D}g\left(\displaystyle \frac{H}{\kappa}B_{0}(x)\right)\,dx +C_{3}\kappa^{\tau_{0}}\,.$$
This finishes the proof of the upper bound.

\subsection{Lower bound} We keep the same notation as in the derivation of the upper bound. We start with \eqref{eq-loc-en} and write,
\begin{equation}\label{eq-loc-en2}
\mathcal E(\psi,\Ab;D)\geq\mathcal E_0(\psi,\Ab;D_{\ell,\epsilon})+\mathcal E_0(\psi,\Ab;D\setminus D_{\ell,\epsilon})\,.
\end{equation}
Similarly, as we did for the Lower bound~\ref{lower bound}, we get,
\begin{equation}\label{up-E-D}
\mathcal
E_0(\psi,\Ab;D)\geq\kappa^{2}\int_{D}g\left(\frac{H}{\kappa}B_{0}(x)\right)\,dx-C\kappa^{\tau_{0}}\,.
\end{equation}
This finish the proof of Theorem~\ref{thm-2D-mainloc}.

\section{Proof of  Theorem~\ref{thm-2D-op}}
\subsection{\textbf{Proof of\eqref{eq-2D-op'}}}
Let $(\psi,\Ab)$ be a solution of \eqref{eq-2D-GLeq} and $\tau_1=\tau_{0}-2$. Then $\psi$
satisfies,
\begin{equation}\label{eq-GLeq-1}
-(\nabla-i\kappa H\Ab)^2\psi=\kappa^2(1-|\psi|^2)\psi\,\quad{\rm
in}\quad\Omega\,.
\end{equation}
We multiply both sides of the equation in \eqref{eq-GLeq-1} by
$\overline{\psi}$ then we integrate over $D$. An integration
by parts gives us,
\begin{equation}\label{1eq}
\int_{D}\left(|(\nabla-i\kappa H\Ab)\psi|^2-\kappa^2|\psi|^2+\kappa^2|\psi|^4\right)\,dx
-\int_{\partial D}\nu\cdot(\nabla-i\kappa
H\Ab)\psi\,\overline{\psi}\,d\sigma(x)=0\,.
\end{equation}
Using the estimates \eqref{eq-psi<1}, \eqref{kappaH} and \eqref{eq-est1} , we get
that the boundary term which is not necessary 0 if $D\neq\Omega$ above is $\mathcal O(\kappa)$\,.
So, we rewrite \eqref{1eq} as follows,
\begin{equation}\label{eq-proof-op}-\frac12\kappa^2\int_{D}|\psi|^4\,dx=\mathcal E_0(\psi,\Ab;D)+\mathcal O(\kappa)\,.\end{equation}
Using \eqref{up-E-D}, we conclude that,
\begin{equation}\label{eq-op-ub}
\frac12\int_{D}|\psi|^4\,dx\leq
-\int_{D}g\left(\frac{H}{\kappa}B_{0}(x)\right)dx + C\kappa^{\tau_1}.
\end{equation}
\subsection{\textbf{Proof of \eqref{eq-2D-op}}}
If $(\psi,\Ab)$ is a minimizer of \eqref{eq-2D-GLf},  then
\eqref{eq-proof-op} is still true. We apply in this case
Theorem~\ref{thm-2D-mainloc} to write an upper bound of $\mathcal
E_0(\psi,\Ab;D)$. Consequently, we deduce that,
\begin{equation}\label{eq-op-lb}
\frac12\int_{D}|\psi|^4\,dx\geq -\int_{D}g\left(\frac{H}{\kappa}B_{0}(x)\right)dx-C\kappa^{\tau_1}.
\end{equation}
Combining the upper bound in \eqref{eq-op-lb} with the lower bound
in \eqref{eq-op-ub} finishes the proof of Theorem~\ref{thm-2D-op}.

\appendix
\section{}~\\
\subsection{\textbf{$L^{p}$-regularity for the curl-div system}}~\\
We consider the two dimensional case. We denote, for $k\in \N$, by $W^{k, p}_{{\rm div}}(\Omega)$ the space
 $$W^{k,p}_{{\rm div}}(\Omega)=\{\Ab\in W^{k,p}(\Omega), {\rm div}\Ab=0\,{\rm and}\,\Ab\cdot\nu=0\,{\rm on}\,\partial\Omega\}.$$
Then we have the following $L^p$ regularity for the curl-div system.
\begin{prop}\label{curl-div-reg}
Let $1\leq p<\infty$. If $\Ab\in W^{1,p}_{\rm div}(\Omega)$ satisfies $\curl \Ab\in W^{k,p}(\Omega)$, for some $k\geq 0$, then $\Ab\in W^{k+1,p}_{\rm div}(\Omega)$.
\end{prop}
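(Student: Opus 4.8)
The plan is to reduce the curl--div system to a scalar Dirichlet problem for the Laplacian and to read off the gain of regularity from classical $L^p$ elliptic theory. Write $B=\curl\Ab\in W^{k,p}(\Omega)$, and let $\phi$ be the unique solution of the Dirichlet problem $-\Delta\phi=B$ in $\Omega$, $\phi=0$ on $\partial\Omega$. Since $\Omega$ is bounded and smooth, the Calder\'on--Zygmund (Agmon--Douglis--Nirenberg) $L^p$ regularity for the Dirichlet Laplacian yields $\phi\in W^{k+2,p}(\Omega)$, together with the bound $\|\phi\|_{W^{k+2,p}(\Omega)}\leq C\|B\|_{W^{k,p}(\Omega)}$. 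This is the analytic engine of the whole argument.

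Next I would introduce the candidate field $\tilde\Ab=\nabla^{\bot}\phi=(\partial_{x_2}\phi,-\partial_{x_1}\phi)$ and verify directly that it solves the same system as $\Ab$. A one-line computation gives $\Div\tilde\Ab=0$ and $\curl\tilde\Ab=-\Delta\phi=B$, while the boundary condition $\phi\equiv 0$ on the connected curve $\partial\Omega$ forces $\nabla\phi$ to be purely normal there, so that its rotation $\tilde\Ab=\nabla^{\bot}\phi$ is tangential, i.e. $\tilde\Ab\cdot\nu=0$ on $\partial\Omega$. By construction $\tilde\Ab\in W^{k+1,p}_{\rm div}(\Omega)$, so it already has the desired regularity.

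It then remains to identify $\Ab$ with $\tilde\Ab$. Set $w=\Ab-\tilde\Ab\in W^{1,p}(\Omega)$; it satisfies $\Div w=0$, $\curl w=0$ in $\Omega$ and $w\cdot\nu=0$ on $\partial\Omega$. Because $\Omega$ is simply connected, $\curl w=0$ gives a potential $w=\nabla h$ with $h\in W^{2,p}(\Omega)$; then $\Div w=0$ makes $h$ harmonic, and $w\cdot\nu=\partial_\nu h=0$ on the connected boundary, so $h$ is constant and $w=0$. Hence $\Ab=\tilde\Ab\in W^{k+1,p}_{\rm div}(\Omega)$, which is the assertion. Note that this argument treats all $k\geq 0$ at once and needs no induction, since the stream function carries two derivatives more than $B$.

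The step requiring care is the uniqueness argument: the hypotheses that $\Omega$ is simply connected and that $\partial\Omega$ is connected are exactly what makes the space of divergence-free, curl-free fields with vanishing normal trace trivial (the relevant harmonic Dirichlet/Neumann fields vanish). One should also justify the potential representation $w=\nabla h$ at the $W^{1,p}$ level rather than assuming smoothness, e.g. through the Helmholtz decomposition, so that the whole chain of implications stays within the low-regularity class in which $w$ is given.
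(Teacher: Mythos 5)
Your proof is correct and is essentially the paper's own argument: both reduce the curl--div system to the scalar Dirichlet problem $-\Delta\phi=\curl\Ab$ in $\Omega$, $\phi=0$ on $\partial\Omega$, and read off the conclusion from the $W^{k,p}\to W^{k+2,p}$ regularity of the Dirichlet Laplacian on a smooth bounded domain. The only difference is organizational: the paper represents the given field directly as $\Ab=\nabla^{\bot}\psi$ with a stream function vanishing on $\partial\Omega$ (which rests on the same facts you invoke, namely simple connectedness and $\Ab\cdot\nu=0$ on the connected boundary) and applies the regularity to $\psi$, whereas you construct $\tilde\Ab=\nabla^{\bot}\phi$ first and then add a correct uniqueness step to identify $\Ab=\tilde\Ab$.
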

\begin{proof}
If $\Ab$ belongs to $W^{1,p}_{\rm div}(\Omega)$ and $\curl \Ab\in L^{p}(\Omega)$, then there exists $\psi\in W^{2,p}(\Omega)$ such that
$\Ab=(-\partial _{x_{2}}\psi,\partial_{x_{1}}\psi)$, $-\Delta \psi=\curl\Ab$, with $\psi=0 $ on $\partial\Omega$.
This is simply the Dirichlet $L^p$ problem for the Laplacian (See \cite{FH-p}, Section A.1). The result we need for proving the proposition is then that if $-\Delta \psi$ is in addition in $W^{k,p}(\Omega)$ then $\psi\in W^{k+2,p}(\Omega)$. This is simply an $L^p$ regularity result for the Dirichlet problem for the Laplacian which is described in (\cite{FH-p}, Section F.4).
\end{proof}
\subsection{\textbf{Construction of $\varphi_{x_0}$.}}
\begin{lem}\label{curlF0}
If $B_{0}\in L^{2}(\Omega)$, then there exists a unique $\Fb\in H^{1}_{\Div}(\Omega)$ such that,
\begin{equation}\label{A_F}
\curl\Fb=B_{0}\,.
\end{equation}
\end{lem}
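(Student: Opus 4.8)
The plan is to reduce the problem to a scalar Dirichlet problem for the Laplacian by means of a stream function. Since we seek $\Fb$ with $\Div\Fb=0$ on the (simply connected) domain $\Omega$, it is natural to look for $\Fb$ of the form $\Fb=\nabla^{\bot}\phi=(\partial_{x_2}\phi,-\partial_{x_1}\phi)$ for a scalar function $\phi$. With this ansatz the condition $\Div\Fb=0$ holds automatically, since $\Div\nabla^{\bot}\phi=\partial_{x_1}\partial_{x_2}\phi-\partial_{x_2}\partial_{x_1}\phi=0$, while $\curl\Fb=\partial_{x_1}(-\partial_{x_1}\phi)-\partial_{x_2}(\partial_{x_2}\phi)=-\Delta\phi$. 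Hence the requirement $\curl\Fb=B_0$ becomes $-\Delta\phi=B_0$.

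First I would handle the boundary condition $\Fb\cdot\nu=0$. A direct computation gives $\Fb\cdot\nu=\nabla^{\bot}\phi\cdot\nu=\nabla\phi\cdot(-\nu_2,\nu_1)=\partial_{\tau}\phi$, the tangential derivative of $\phi$ along $\partial\Omega$ (here $\tau=(-\nu_2,\nu_1)$ is the unit tangent). Thus imposing $\phi=0$ on $\partial\Omega$ forces $\partial_{\tau}\phi=0$ and therefore $\Fb\cdot\nu=0$. Consequently existence reduces to solving the homogeneous Dirichlet problem $-\Delta\phi=B_0$ in $\Omega$, $\phi=0$ on $\partial\Omega$. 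Since $B_0\in L^2(\Omega)$ and $\partial\Omega$ is smooth, the standard $L^2$ solvability and regularity theory for the Dirichlet Laplacian (see \cite{FH-p} and the proof of Proposition~\ref{curl-div-reg}) yields a unique $\phi\in H^2(\Omega)\cap H^1_0(\Omega)$. Setting $\Fb=\nabla^{\bot}\phi$ then produces an $H^1$ vector field (because $\phi\in H^2$) satisfying $\Div\Fb=0$, $\curl\Fb=B_0$ and $\Fb\cdot\nu=0$, so $\Fb\in H^1_{\Div}(\Omega)$ with the desired curl.

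For uniqueness, suppose $\Fb_1,\Fb_2$ both satisfy the conclusion and set $\mathbf G=\Fb_1-\Fb_2$. Then $\curl\mathbf G=0$, $\Div\mathbf G=0$ in $\Omega$ and $\mathbf G\cdot\nu=0$ on $\partial\Omega$. Because $\Omega$ is simply connected and $\mathbf G$ is curl-free, there is a scalar $f$ with $\mathbf G=\nabla f$; then $\Delta f=\Div\mathbf G=0$ and $\partial_{\nu}f=\mathbf G\cdot\nu=0$, so $f$ is harmonic with vanishing Neumann data and is therefore constant, giving $\mathbf G=0$.

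The only genuinely delicate points are the translation of $\Fb\cdot\nu=0$ into the boundary condition $\partial_{\tau}\phi=0$ for the stream function, and the use of simple connectivity in the uniqueness step to produce the potential $f$. Everything else is a routine application of the $L^2$ theory of the Dirichlet problem, so I expect the main technical care to lie in verifying the tangential-derivative identity $\Fb\cdot\nu=\partial_{\tau}\phi$ and in confirming that $\phi\in H^2(\Omega)$ indeed yields $\Fb=\nabla^{\bot}\phi\in H^1(\Omega)^2$.
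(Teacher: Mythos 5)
Your proof is correct and follows essentially the same route as the paper: both construct $\Fb=\nabla^{\bot}\phi=(\partial_{x_2}\phi,-\partial_{x_1}\phi)$ with $\phi\in H^{2}(\Omega)\cap H^{1}_{0}(\Omega)$ solving the Dirichlet problem $-\Delta\phi=B_{0}$, and both obtain $\nu\cdot\Fb=0$ from the identity $\Fb\cdot\nu=\partial_{\tau}\phi$ on $\partial\Omega$. The only difference is that you spell out the uniqueness step (the divergence-free, curl-free difference field with vanishing normal trace is a gradient of a harmonic function with zero Neumann data, hence zero), which the paper leaves implicit by citing it as standard.
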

\begin{proof}
The proof is standard, see \cite{GR}. Let
$\Fb=\left[\begin{array}{c}
\partial_{x_{2}}f\\
-\partial_{x_{1}}f\end{array}\right]\,,$ where $f\in
H^{2}(\Omega)\bigcap H^{1}_{0}(\Omega)$ is the unique solution of
\begin{equation}-\Delta f=B_{0}\quad{\rm
in~}\Omega \,.
\end{equation}
Then we deduce from the Dirichlet condition satisfied by $f$  that $\tau\cdot \nabla f=0\quad{\rm
on~}\partial\Omega $
which  is equivalent to $\nu\cdot\Fb=0\quad{\rm
on~}\partial\Omega\,.$  This finishes the proof of Lemma
\ref{curlF0}.
\end{proof}
We continue with a lemma that will be useful in estimating the Ginzburg-Landau functional.
\begin{lem}\label{app F}
There exists a positive constant $C$ such that,  if  $\ell\in (0, 1)$ and $x_{0} \in\Omega $ are such that $\overline{Q_{\ell}(x_{0})} \subset \Omega$,
then for any $\widetilde{x_{0}}\in\overline{Q_{\ell}(x_{0})}$, there exists a function $\varphi_{0}\in
H^{1}(\Omega)$ such that the magnetic potential
$\Fb$ satisfies,
\begin{equation}{\label{lem-F}}
|\Fb(x)-\nabla\varphi_{0}(x)-B_{0}(\widetilde{x_{0}})\Ab_{0}(x-x_{0})|\leq
C\ell^{2}\,,\,\,\,\,\,\,\,\Big(x\in Q_{\ell}(x_{0})\Big)\,,
\end{equation}
where $B_{0}$ is the function introduced in \eqref{B(x)} and $\Ab_0$ is the magnetic potential
introduced in \eqref{eq-hc2-mpA0}.
\end{lem}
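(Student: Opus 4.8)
The plan is to reduce everything to a first-order Taylor expansion of $\Fb$ at the center $x_0$ of the square, and then to observe that the antisymmetric part of the Jacobian $D\Fb(x_0)$ is precisely the linear model potential $B_0(x_0)\Ab_0(\cdot-x_0)$, while the symmetric part together with the constant term is the gradient of an explicit quadratic polynomial. This is what produces an error that is genuinely quadratic in $\ell$ rather than merely linear.

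First I would record that, since $\overline{Q_\ell(x_0)}\subset\Omega$ and $B_0\in C^\infty(\overline\Omega)$, the field $\Fb$ from \eqref{div-curlF} (built in Lemma~\ref{curlF0} as $\Fb=(\partial_{x_2}f,-\partial_{x_1}f)$ with $-\Delta f=B_0$) is smooth in a neighbourhood of $\overline{Q_\ell(x_0)}$, so its $C^2$-norm there is bounded by a constant depending only on $\Omega$ and $B_0$. Taylor's formula with second-order remainder then gives, for $x\in Q_\ell(x_0)$,
$$\Fb(x)=\Fb(x_0)+D\Fb(x_0)(x-x_0)+O(\ell^2),$$
since $|x-x_0|\le\ell$ on the square.

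Next I would split $D\Fb(x_0)=S+A$ into its symmetric part $S$ and antisymmetric part $A$. Using $\curl\Fb=\partial_{x_1}\Fb_2-\partial_{x_2}\Fb_1=B_0$, the antisymmetric part is
$$A=\frac{B_0(x_0)}{2}\begin{pmatrix}0 & -1\\ 1 & 0\end{pmatrix},$$
so that $A(x-x_0)=B_0(x_0)\Ab_0(x-x_0)$, because $\Ab_0(y)=\frac12(-y_2,y_1)$ is linear with Jacobian $\frac12\begin{pmatrix}0 & -1\\ 1 & 0\end{pmatrix}$. The remaining pieces are gradients: setting
$$\varphi_0(x)=\Fb(x_0)\cdot(x-x_0)+\tfrac12\,(x-x_0)^{T}S\,(x-x_0),$$
one has $\nabla\varphi_0(x)=\Fb(x_0)+S(x-x_0)$ because $S$ is symmetric. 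This $\varphi_0$ is a quadratic polynomial, hence lies in $C^\infty(\overline\Omega)\subset H^1(\Omega)$, as required. Combining the three identities yields $\Fb(x)-\nabla\varphi_0(x)-B_0(x_0)\Ab_0(x-x_0)=O(\ell^2)$ on $Q_\ell(x_0)$.

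Finally, to replace $B_0(x_0)$ by $B_0(\widetilde{x_0})$, I would estimate $|B_0(x_0)-B_0(\widetilde{x_0})|\le\|\nabla B_0\|_\infty\,|x_0-\widetilde{x_0}|\le C\ell$ (since $\widetilde{x_0}\in\overline{Q_\ell(x_0)}$) and $|\Ab_0(x-x_0)|\le\frac12|x-x_0|\le C\ell$, so that $(B_0(x_0)-B_0(\widetilde{x_0}))\Ab_0(x-x_0)=O(\ell^2)$; absorbing this into the previous remainder gives \eqref{lem-F}. The argument is elementary throughout, and the one step carrying real content — which I would isolate first — is the identification of the antisymmetric part of $D\Fb(x_0)$ with $B_0(x_0)\Ab_0$; every other term is either a gradient or a lower-order Taylor remainder, and note that $\Div\Fb=0$ is in fact not needed here.
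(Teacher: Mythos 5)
Your proof is correct and follows essentially the same route as the paper: a second-order Taylor expansion of $\Fb$, then a splitting of the Jacobian into its symmetric part (the gradient of a quadratic polynomial) and its antisymmetric part, which equals $B_{0}\Ab_{0}(\cdot-x_{0})$ because $\curl\Fb=B_{0}$. The only difference is the base point: the paper expands directly at $\widetilde{x_{0}}$, while you expand at the center $x_{0}$ and absorb $(B_{0}(x_{0})-B_{0}(\widetilde{x_{0}}))\Ab_{0}(x-x_{0})$ as an additional $O(\ell^{2})$ term via the Lipschitz bound on $B_{0}$ — a valid and equivalent way to handle the arbitrary $\widetilde{x_{0}}$.
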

\begin{proof}
We use $\mathrm{T}$aylor formula near $\widetilde{x_{0}}$ to order
$2$ and get:
\begin{equation}\label{second appF}
\Fb(x)=\Fb(\widetilde{x_{0}})+M(x-\widetilde{x_{0}})+\mathcal
O(|x-\widetilde{x_{0}}|^2)\,,\,\,\,\,\,\,\,\,\, \forall x\in
Q_{\ell}(x_0)\,,
\end{equation}
 where $$M=D\Fb(\widetilde{x_{0}})=
\left[\begin{array}{cc} \displaystyle\frac{\partial\Fb^{1}}{\partial x_{1}}{\displaystyle\mid_{\widetilde{x_{0}}}}&\displaystyle\frac{\partial\Fb^{1}}{\partial x_{2}}{\displaystyle\mid_{\widetilde{x_{0}}}}\\
\displaystyle\frac{\partial\Fb^{2}}{\partial
x_1}{\displaystyle\mid_{\widetilde{x_{0}}}}&\displaystyle\frac{\partial\Fb^{2}}{\partial
x_2}{\displaystyle\mid_{\widetilde{x_{0}}}}
\end{array}
\right]\,.$$ We can write $M$ as the sum of two matrices,
$M=M^{s}+M^{as}$, where $M^{s}=\frac{M+M^{t}}{2}$ is symmetric and $M^{as}=\frac{M-M^{t}}{2}$ is antisymmetric.\\
Notice that $\curl
\Fb(\widetilde{x_{0}})=\displaystyle\frac{\partial \Fb^{2}}{\partial
x_1}{\displaystyle\mid_{\widetilde{x_{0}}}}-\displaystyle\frac{\partial
\Fb^{1}}{\partial
x_2}{\displaystyle\mid_{\widetilde{x_{0}}}}=B_{0}(\widetilde{x_{0}})$.
Consequently, $$M^{as}=\left[\begin{array}{cc}
0&-B_{0}/2\\
B_{0}/2&0
\end{array}
\right]\,.$$
Substitution into $M$ gives as that,
$$M(x-x_{0})=\nabla\phi_{0}(x)+B_{0}(\widetilde{x_{0}})\Ab_{0}(x-x_{0})\,,$$
where $\Ab_{0}(x)=\frac{1}{2}(-x_2,x_1)$ and the function
$\phi_{0}$ is defined by
$$\phi_{0}(x)=\frac{1}{2}\left\langle\left(\frac{M+M^{t}}{2}\right)(x-x_{0}),(x-x_{0})\right\rangle\,.$$
Let
$\varphi_{0}(x)=\phi_{0}(x)+\left(\Fb(\widetilde{x_{0}})+M(x_{0}-\widetilde{x_{0}})\right)\cdot
x\,.$ Substitution into \eqref{second appF} gives as that,
$$\Fb=B_{0}(\widetilde{x_{0}})\Ab_{0}(x-x_{0})+\nabla\varphi_{0}(x)+\mathcal{O}\left(|x-\widetilde{x_{0}}|^{2}\right)\,.$$
Notice that, if $x\in Q_{\ell}(x_0)\,,$ then
$|x-\widetilde{x_{0}}|\leq\ell\sqrt{2}$. This finishes the proof of Lemma
\ref{app F}.
\end{proof}

\begin{rem}
We will apply this lemma by considering $\widetilde x_0$ such that $B_0(\widetilde x_0) = \sup_{Q_\ell(x_0)} B_0(x)$ or
$B_0(\widetilde x_0) = \inf_{Q_\ell(x_0)} B_0(x)$\,.
\end{rem}
\section*{Acknowledgements}
This work is partially supported by a grant from Lebanese University.
I would like to thank my supervisors \textit{A.Kachmar} and \textit{B.Helfer} for their support.
%
%
%
%

\end{document}